\documentclass[a4paper,reqno]{amsart}

\usepackage{lmodern}
\usepackage{slantsc}
\newcommand{\slkz}{{\normalfont\textsl{\textsc{kz}}}}
\newcommand{\kz}{{\normalfont\textsc{kz}}}
\newcommand{\mathkz}{{\normalfont\kz}}

\usepackage[all,2cell,cmtip]{xy}
\usepackage{mathrsfs,amssymb,stmaryrd,bbm,mathabx}
\xyoption{v2}
\UseAllTwocells
\usepackage{amsmath}
\usepackage{gitinfo}
\usepackage{color}
\definecolor{darkgreen}{rgb}{0,0.45,0}
\usepackage[colorlinks,citecolor=darkgreen,urlcolor=gray,final,backref=page,hyperindex]{hyperref}
\usepackage{autonum}
 \usepackage{tikz}
\usepackage[textsize=scriptsize,color=white]{todonotes}
\usepackage{tensor}
\usepackage[abbrev,msc-links,bibtex-style]{amsrefs}
\usepackage[inline]{enumitem}
\usepackage{colonequals}
\usepackage{mathtools}

\numberwithin{equation}{section}

\usepackage{thmtools,thm-restate}
\declaretheorem[style=plain,numberlike=equation,title=Theorem]{theorem}
\declaretheorem[style=plain,numberlike=equation,title=Corollary]{cor}
\declaretheorem[style=plain,numberlike=equation,title=Lemma]{lemma}
\declaretheorem[style=plain,numberlike=equation,title=Proposition]{prop}

\declaretheorem[style=remark,numberlike=equation,title=Remark]{rmk}

\declaretheorem[style=remark,numberlike=equation,title=Example]{ex}

\declaretheorem[style=remark,numberlike=equation,title=Notation]{notation}
\declaretheorem[style=remark,numberlike=equation,title=Assumption]{Ass}

\declaretheorem[style=definition,numberlike=equation,title=Definition]{df}

\newcommand{\arepmult}{\mathbf{ARep}\multicat}
\newcommand{\clopfib}{\ensuremath{\mathbf{OpFib}}}


\newcommand{\multicat}{\ensuremath{\mathbf{MCat}}}

\newcommand{\VCat}{\ensuremath{\V\text-\mathbf{Cat}}}
\newcommand{\VCAT}{\ensuremath{\V\text-\mathbf{CAT}}}
\newcommand{\Cat}{\ensuremath{\mathbf{Cat}}}
\newcommand{\CAT}{\ensuremath{\mathbf{CAT}}}
\newcommand{\C}{\ensuremath{\mathcal{C}}}
\newcommand{\A}{\ensuremath{\mathcal{A}}}

\DeclareMathOperator{\dom}{dom}
\DeclareMathOperator{\cod}{cod}

\newcommand{\two}{{\mathbf{2}}}

\newcommand{\Sq}{\mathbb{S}\mathrm{q}}

\DeclareMathOperator{\col}{col}
\newcommand{\sperp}{{\pitchfork\hspace{-4.9pt}\pitchfork}}
\newcommand{\uc}{{\circ}}

\DeclareMathOperator{\Ran}{Ran}
\newcommand{\Topo}{\ensuremath{\mathbf{Top}_0}}
\newcommand{\Pord}{\ensuremath{\mathbf{Ord}}}
\newcommand{\Ord}{\ensuremath{\mathbf{Ord}}}

\newcommand{\V}{\ensuremath{\mathcal{V}}}

\DeclareMathOperator{\colim}{colim}

\begin{document}
\title{Cofibrantly generated lax orthogonal factorisation systems}
\author{Ignacio L\'opez~Franco}
\address{Departamento de Matem\'atica y Aplicaciones, CURE, Universidad de la
  Rep\'ublica\\
Tacuaremb\'o s/n, Maldonado, Uruguay}
\email{ilopez@cure.edu.uy}
\thanks{
  The author gratefully acknowledges the support of the following institutions
  during the long gestation of this article: a Research Fellowship of
  Gonville and Caius College, Cambridge; the Department of Pure
  Mathematics and Mathematical Statistics of the University of Cambridge;
  \textsc{sni-anii}, \textsc{pedeciba} and Universidad de la Rep\'ublica.
}
\subjclass[2010]{Primary 18A32; Secondary 55U35, 18D20}
\keywords{Algebraic weak factorization system, weak factorization system, lax
  factorization system, multicategory, continuous lattice, orthogonal
  factorization system, cofibrant generation.}
\begin{abstract}
  The present note has three aims. First, to complement the theory of
  cofibrant generation of algebraic weak factorisation systems (\textsc{awfs}s)
  to cover some important examples that are not
  locally presentable categories. Secondly, to prove that cofibrantly
  \textsc{kz}-generated \textsc{awfs}s (a notion we define) are always lax
  orthogonal. Thirdly, to show that
  the two known methods of building lax orthogonal \textsc{awfs}s, namely
  cofibrantly \textsc{kz}-generation and the method of ``simple adjunctions'',
  construct different \textsc{awfs}s. 
  We study in some detail the example of cofibrant \kz-generation that yields
  representable multicategories, and a counterexample to cofibrant generation
  provided by continuous lattices.
\end{abstract}
\maketitle
\section{Introduction}
\label{sec:introduction}

Lax orthogonal factorisation systems (\textsc{lofs}s) are a type of algebraic
weak factorisation systems (\textsc{awfs}s) on 2-categories, introduced in
\cite{Clementino2016458}, for which the diagonal fillers satisfy a universal
property, similar to the property of a left Kan extension. Several examples of
\textsc{lofs}s were constructed in \cite{Clementino2016458,lmcs3960} using the
method of ``simple adjunctions.'' The present note addresses an aspect of this
theory that has not been touched upon: the cofibrant generation of
\textsc{lofs}.
The cofibrant generation of \textsc{awfs}s on locally presentable categories were
studied in~\cite{MR3393453}, encompassing a large family of examples that,
however, do not reach some important ones, as those based on the category of
topological spaces. The present note is an attempt to fill this gap in the
literature.

The article can be divided in two parts. The first, taking most of the
article, that deals with cofibrant generation of
\textsc{awfs}s enriched over a base category $\V\subseteq\Cat$, and the second
that looks at the case when \V\ is the category of preorders.

\textsc{Awfs}s
were introduced by M.~Grandis and W.~Tholen in~\cite{MR2283020}, with latter
contributions by R.~Garner~\cite{MR2506256}, and, as the name indicates, they are an
algebraisation of the more classical notion of a weak factorisation system
(\textsc{wfs}s). A \textsc{wfs} consists of two classes of morphisms
$(\mathcal{L},\mathcal{R})$ with the property that each morphism $f$ can be
written as $f=r\cdot \ell$, with $\ell\in\mathcal{L}$ and $r\in\mathcal{R}$, and
each $r\in \mathcal{R}$ precisely when it has
the right lifting property with respect to each $\ell\in\mathcal{L}$, ie for each
commutative square as displayed, there exists a --~non necessarily unique~--
diagonal filler
\begin{equation}
  \label{eq:12}
  \diagram
  \cdot\ar[r]\ar[d]_\ell&\cdot\ar[d]^r\\
  \cdot\ar@{..>}[ur]\ar[r]&\cdot
  \enddiagram
\end{equation}
and dually, $\ell\in\mathcal{L}$ precisely when it has the left lifting property
with respect to each $r\in\mathcal{R}$.
This kind of lifting situation was common in algebraic topology long
before the importance of \textsc{wfs}s was fully realised, for
which Quillen's definition of model category was central.

One of the features that distinguishes \textsc{awfs}s from
\textsc{wfs}s --~on a category \C, say~-- is that the factorisation of
morphisms is functorial --~as is also the case in many \textsc{wfs}s constructed by
the so-called Quillen's small object argument~\cite{MR0223432}.
The left and right classes of morphisms are replaced, respectively, by the
coalgebras and algebras for a certain comonad and monad on $\C^\two$, and this
extra algebraic structure ensures that diagonal
fillers~\eqref{eq:12} not only exist but can be constructed from algebraic
data.

Lax orthogonal factorisation systems, introduced in~\cite{Clementino2016458}, are
\textsc{awfs}s on 2-categories for which the canonical diagonal, say $d$,
satisfies a certain universal property with respect to 2-cells: given any other
diagonal filler $w$ and 2-cells $\alpha\colon h\Rightarrow w\cdot \ell$ and
$\beta\colon k\Rightarrow r\cdot w$, there exists a unique 2-cell $\gamma\colon d\Rightarrow w$
such that $\gamma\cdot \ell=\alpha$ and $r\cdot\gamma=\beta$.
\begin{equation}
  \label{eq:19}
  \xymatrixcolsep{1.7cm}
  \diagram
  \cdot\ar[r]^-h\ar[d]_\ell&
  \cdot\ar[d]^r\\
  \cdot\urtwocell^d_w{\hole\exists !}\ar[r]&
  \cdot
  \enddiagram
\end{equation}
See
Section~\ref{sec:lax-orth-fact} or~\cite{Clementino2016458} for more on the
definition of \textsc{lofs}, along with the basic theory of \textsc{lofs}s and a
procedure to construct them via the so-called \emph{simple adjunctions}.
When the codomain of $r$ is a terminal object,
the above condition says that the identity 2-cell $h=d\cdot \ell$ exhibits $d$ as a
left Kan extension of $h$ along $\ell$. Objects $A$ with this property with
respect to a family of morphisms $\ell$ have been studied in the context of
poset-enriched categories and called \emph{Kan objects}~\cite{MR1641443} or
\emph{Kan injective objects}~\cite{MR3283679}.

The notion of cofibrant generation adapted to \textsc{awfs}s was introduced
in~\cite{MR2506256}, and latter extended to generation by a double category
in~\cite{MR3393453}, where, in addition, enriched cofibrant generation is discussed.

There is a notion of cofibrant generation for
\textsc{lofs}s, that we call \emph{cofibrant \slkz-generation}. We show that
cofibrantly \kz-generated \textsc{awfs}s are always \textsc{lofs}.
Cofibrant \kz-generation can be
seen as a case of the constructions in~\cite[\S 8]{MR3393453} --~even though
\cite{MR3393453} does not consider \textsc{lofs}s and concentrates on locally
presentable categories. Representable multicategories provide an example that we
study in some detail.
There are
important examples,
however, that are not locally presentable categories, as the
category of topological spaces. We study the case of categories enriched in
posets and the cofibrant \kz-generation thereon, encompassing in this way the
example of topological spaces.

We have mentioned two ways of constructing new \textsc{lofs}s:
via simple adjunctions and via cofibrant \kz-generation. It is natural to ask
whether these two procedures construct the same \textsc{lofs}s. We give a negative
answer to this question, exhibiting a \textsc{lofs} that can be
constructed via simple adjunctions but are not cofibrantly \kz-generated, nor
cofibrantly generated; furthermore, its underlying \textsc{wfs} is not
cofibrantly generated, in the usual sense of the term. The \textsc{lofs} in
question is defined on the category of $T_0$ topological
spaces, its left morphisms are the subspace
embeddings, and its fibrant
objects are the continuous lattices \cite[\S 12]{Clementino2016458}
\cite{lmcs3960,1702.02602}.

We conclude this introduction with a description of the article's
contents. Section~\ref{sec:backgr-algebr-weak} collects some of the
constructions and results relative to \textsc{awfs}s needed in the rest of the
article. In Section~\ref{sec:cofibrant-generation} we adapt the notions of
cofibrant generation introduced in~\cite{MR2506256,MR3393453} to the case of
categories enriched in $\V\subseteq\Cat$.
Lax orthogonal factorisation systems are recalled in
Section~\ref{sec:lax-orth-fact} and the
notion of cofibrant \kz-generation is introduced in Section~\ref{sec:cofibrant-generation-1}.
Before showing that cofibrantly \kz-generated \textsc{awfs}s are \textsc{lofs}s
in Section~\ref{sec:lax-orthogonality-kz}, we show in
Section~\ref{sec:exist-locally-pres} the existence of cofibrantly \kz-generated
\textsc{awfs}s in the locally presentable case.
Section \ref{sec:repr-cat-enrich} looks to the example of representable
multicategories as arising from a cofibrantly \kz-generated \textsc{lofs} on
multicategories.
Section~\ref{sec:cons-cofibr-gener} shows that cofibrantly generated
and \kz-generated \textsc{awfs}s must satisfy certain accessibility, or
colimit-creation property, to be used in a latter section.
Section~\ref{sec:cofibr-kz-gener} proves an existence result for
cofibrantly \kz-generated \textsc{awfs} on preorder-enriched
categories and compares it with \cite{MR3283679}, while
Section~\ref{sec:continuous-lattices-1} exhibits an example of a \textsc{lofs}
on $\mathbf{Top}_0$ that is \emph{not} cofibrantly \kz-generated, nor cofibrantly
generated, and whose underlying \textsc{wfs} is \emph{not} cofibrantly generated
in the usual meaning of the term.
There is an Appendix~\ref{sec:lax-idempotent-2} with background on lax
idempotent 2-monads and our own results on reflections of lax idempotent 2-monads
along functors.

\section{Background on algebraic weak factorisation systems}
\label{sec:backgr-algebr-weak}

As mentioned in the introduction, we are interested in 2-categories and locally
ordered categories. One could choose to develop the exposition in the context of
$\mathcal{V}$-enriched categories, for a fairly general symmetric
monoidal closed category $\mathcal{V}$. On the other hand, one could treat only
the case of 2-categories and, when necessary, argue that the relevant
constructions and results restrict to locally ordered 2-categories. We will
take a middle of the road approach and consider categories enriched in a full
sub-2-category $\mathcal{V}\subseteq\Cat$, closed under limits and exponentials,
and that is cocomplete. Furthermore, it will be important for
our applications that the
arrow category $\two$ should belong to \V. Our main examples of \V\ will be the 2-categories
\Cat\ of small categories and \Pord\
of posets. We do not assume that the inclusion $\V\subseteq\Cat$ preserves
colimits. This, together with $\two\in\V$, would imply that the inclusion is
an equivalence.
Many of the $\mathcal{V}$-enriched notions
we discuss below hold for a much more general $\mathcal{V}$, and are not difficult to
elaborate in that context. Since they do not add much to our main examples, we
leave them to be
developed elsewhere.

\subsection{Functorial factorisations}
\label{sec:funct-fact}

A \V-functorial factorisation on a \V-category $\mathcal{C}$ is a \V-functor
$\C^\two\to\C^{\mathbf{3}}$ that is a section of the composition $\V$-functor
$\C^{\mathbf{3}}\to\C^\two$. Equivalently, it is a \V-functor
$K\colon\C^\two\to\C$ with \V-natural transformations $\dom\Rightarrow
K\Rightarrow\cod$ whose composition equals the canonical transformation
$\dom\Rightarrow\cod$ with $f$-component equal to $f$.
In other words, a $\V$-functorial factorisation is
a functorial factorisation as defined in~\cite{MR2283020} that is compatible
with the 2-cells of \C.
\begin{equation}
  \label{eq:24}
  \big(
  A\xrightarrow{f}B
  \big)
  =
  \big(
  A\xrightarrow{Lf}Kf\xrightarrow{Rf} B
  \big)
\end{equation}
As in the
case of ordinary categories, a functorial factorisation can be equivalently
described by a copointed endo-\V-functor $\Phi\colon L\Rightarrow 1$ on
$\C^\two$ with $\dom\Phi=1$, or by a pointed endo-\V-functor $\Lambda\colon1\Rightarrow R$ on
$\C^\two$ with $\cod\Lambda=1$.

Given a \V-functorial factorisation as in the previous paragraph, each coalgebra
structure $(1,s)\colon f\to Lf$ for the copointed endo-\V-functor $(L,\Phi)$ on
$f\in\C^\two$ and each algebra structure $(p,1)\colon Rf\to f$ for the pointed
endo-\V-functor $(R,\Lambda)$ on $g\in\C^\two$ induces a choice of diagonal
fillers for morphisms $(h,k)\colon f\to g$ in $\C^\two$, ie commutative squares
in \C. The diagonal filler for this
square is the composite
\begin{equation}
  \label{eq:9}
  \operatorname{diag}(h,k)\colon
  \cod f=\dom k\xrightarrow{s}Kf \xrightarrow{K(h,k)} Kg \xrightarrow{p} \cod h
  = \dom g.
\end{equation}
If $(\alpha,\beta)\colon (h,k)\Rightarrow(\bar h,\bar k)\colon f\to g$ is a
2-cell in $\C^\two$, then there is a corresponding 2-cell
$\operatorname{diag}(\alpha,\beta)\colon \operatorname{diag}(h,k)\Rightarrow
\operatorname{diag}(\bar h,\bar k)$, given by
$\operatorname{diag}(\alpha,\beta)=p\cdot K(\alpha,\beta)\cdot s$.
See~\cites{MR2283020,MR2506256}, and \cite{Clementino2016458} for the
2-categorical case.

\subsection{Categories internal to \V-categories}
\label{sec:categ-intern-v}

A category internal to $\mathbf{CAT}$ is a double category, and can be described
as having objects and two kinds of morphisms, horizontal and vertical
ones. Objects together with, respectively, horizontal and vertical morphisms form
a category. Furthermore, there are \emph{squares}, each one of which has a
vertical domain and codomain and a horizontal domain and codomain, and can be
composed both vertically and horizontally. There are other details, like
identity squares and various compatibilities between identities and composition,
that can be found in~\cite{elementsof2cat}.

If, instead of categories internal to $\mathbf{CAT}$, we consider categories
internal to $\VCAT$, the resulting structure is that of a double category except
that now there are 2-cells between horizontal morphisms; objects, horizontal
morphisms and these 2-cells form a \V-category. Furthermore, there are 2-cells
between squares which, together with vertical morphisms as objects and squares
as morphisms, form a \V-category. Each category internal to $\VCAT$ has an
underlying double category obtained by disregarding all the 2-cells.

Most of the categories internal to $\VCAT$ we consider will be related to the
one described below.
The arrow category $\two$ has a cocategory
structure, in the sense that it is an internal category in
$\Cat^{\mathrm{op}}$. Another way of putting this is to say that for each
category $\mathcal{A}$ the set $\Cat(\two,\A)=\operatorname{Mor}\mathcal{A}$
carries a category structure, namely, that of $\mathcal{A}$.

As a consequence of the cocategory structure on $\two$, for any $\V$-category
$\C$ there is a category internal to \VCat, called the \emph{internal category
  of squares} in \C, and depicted on the right below.
\begin{equation}
  \label{eq:3}
  \diagram
  \mathbf{3}\ar@{<-}@<-5pt>[r] \ar@{<-}@<5pt>[r] \ar@{<-}[r]&
  \two\ar[r]&
  \mathbf{1}\ar@<5pt>[l]\ar@<-5pt>[l]
  \enddiagram
  \qquad
  \diagram
  \C^{\mathbf{3}}\ar@<-5pt>[r] \ar@<5pt>[r] \ar[r]&
  \C^\two\ar@{<-}[r]&
  \C\ar@{<-}@<5pt>[l]\ar@{<-}@<-5pt>[l]
  \enddiagram
\end{equation}
The underlying double category of $\Sq(\C)$ has objects those of $\C$ and both
vertical and horizontal morphisms the morphisms of \C. Squares are commutative
squares in \C\ and 2-cells between horizontal morphisms are just 2-cells in
\C. A 2-cell between squares is a pair of 2-cells as depicted, that satisfy
$g\cdot\alpha=\beta\cdot f$.
\begin{equation}
  \label{eq:126}
  \diagram
  \cdot\ar[d]_f\ar[r]^h&\cdot\ar[d]^g\\
  \cdot\ar[r]_k&\cdot
  \enddiagram
  \Longrightarrow
  \diagram
  \cdot\ar[d]_f\ar[r]^{\bar h}&\cdot\ar[d]^g\\
  \cdot\ar[r]_{\bar k}&\cdot
  \enddiagram
  \qquad
  \text{is}
  \qquad
  \diagram
  \cdot\ar[d]_f\rtwocell^h_{\bar h}{\alpha}&\cdot\ar[d]^g\\
  \cdot\rtwocell^k_{\bar k}{\beta}&\cdot
  \enddiagram
\end{equation}

There is an obvious notion of internal functor between categories internal to
\VCAT, the formulation of which is left to the reader.

\subsection{Enriched {{\sc awfs}}{\normalfont s}}
\label{sec:enriched-textscawfss}

A \V-enriched \textsc{awfs} on a \V-category \C\ is a \V-functorial
factorisation on \C\ equipped with a comultiplication $\Sigma\colon L\Rightarrow
L^2$ that makes $\mathsf{L}=(L,\Phi,\Sigma)$ a \V-enriched comonad, and a
multiplication $\Pi\colon R^2\Rightarrow R$ that makes
$\mathsf{R}=(R,\Lambda,\Pi)$ a \V-enriched monad. Furthermore,
that the underlying ordinary comonad and monad on the underlying category of
$\C^\two$ should form an \textsc{awfs} as defined in~\cite{MR2506256}; in other words,
the \V-natural transformation $LR\Rightarrow RL$ with components
$(\sigma_f,\pi_f)\colon LRf\to RLf$ must be a mixed distributive law; here we
have used the notation $\Sigma_f=(1,\sigma_f)\colon Lf\to L^2f$ and
$\Pi_f=(\pi_f,1)\colon R^2f\to Rf$.
This distributivity condition, added in~\cite{MR2506256} to the original
definition of \textsc{awfs} --~called natural \textsc{wfs}s
in~\cite{MR2283020}~-- is precisely what is needed in order to have an
associative composition of $\mathsf{R}$-algebras and of
$\mathsf{L}$-coalgebras. An associative composition of $\mathsf{R}$-algebras
chooses, for each pair of $\mathsf{R}$-algebras $f$, $g$ with $\cod f=\dom g$,
an $\mathsf{R}$-algebra structure on the composition $g\cdot f$; these
assignments must be natural with respect to morphisms and 2-cells of
$\mathsf{R}$-algebras, and it must be associative in the sense that the
$\mathsf{R}$-algebra structures of $(h\cdot g)\cdot f$ and $h\cdot (g\cdot f)$
must coincide. A similar statement can be made about $\mathsf{L}$-coalgebras.

The existence of the associative composition mentioned in the previous paragraph
can be rephrased in a more concise way:
if $(\mathsf{L},\mathsf{R})$ is a \V-enriched \textsc{awfs}, there are internal
categories in \VCAT\
\begin{equation}
  \label{eq:11}
  \diagram
  \mathsf{L}\text-\mathrm{Coalg}\ar@<4pt>[r]\ar@{<-}[r]\ar@<-4pt>[r]&
  \C
  \enddiagram
  \qquad
  \diagram
  \mathsf{R}\text-\mathrm{Alg}\ar@<4pt>[r]\ar@{<-}[r]\ar@<-4pt>[r]&
  \C
  \enddiagram
\end{equation}
that we denote by $\mathsf{L}\text-\mathbb{C}\mathrm{oalg}$ and
$\mathsf{R}\text-\mathbb{A}\mathrm{lg}$. These two internal categories come
equipped with internal functors into $\Sq(\C)$ given by forgetting the
(co)algebra structure. Furthermore, given a \V-monad $\mathsf{R}$ on
\C, there is a bijection between compositions that make
$\mathsf{R}\text-\mathrm{Alg}\rightrightarrows\C$ into an internal category in
\VCAT\ and \V-comonads $\mathsf{L}$ such that $(\mathsf{L},\mathsf{R})$ is a
\V-enriched \textsc{awfs}.
This is completely analogous to the
case of ordinary \textsc{awfs}s \cite[\S 2.8]{MR3393453}.

The internal categories in \VCAT\ that arise from an \textsc{awfs} can be
characterised as in~\cite[\S 3]{MR3393453}. If
$\mathbb{D}=(\mathcal{D}_1\rightrightarrows\mathcal{D}_0)$ is an internal
category and $U\colon \mathbb{D}\to\Sq(\C)$ an internal functor in \VCAT, then
$\mathbb{D}$ is isomorphic to $\mathsf{R}\text-\mathbb{A}\mathrm{lg}$ over $\Sq(\C)$, for
an --~essentially unique~-- \V-enriched \textsc{awfs} $(\mathsf{L},\mathsf{R})$
on \C, if the following two conditions hold:
\begin{enumerate*}[label=(\alph*)]
\item
$U_1\colon\mathcal{D}_1\to\C^\two$ is monadic (it has a \V-enriched left adjoint
and the comparison \V-functor into the algebras of the associated \V-monad is an
\emph{isomorphism});
\item the induced
\V-monad is isomorphic to a codomain-preserving \V-monad.
\end{enumerate*}
A more elementary
condition can be found in~\cite[Thm.~6]{MR3393453}.

\subsection{Internal categories of {\sc lari}{\normalfont s} and {\sc
    rali}{\normalfont s}}
\label{sec:intern-categ-laris}
  Later we will need to refer to certain internal \V-categories whose vertical
  morphisms are given by adjunctions.

  A morphism $f\colon A\to B$, in a \V-category $\mathcal{A}$, equipped with a right adjoint whose
  unit is an identity 2-cell may be called a \emph{left adjoint right inverse}, abbreviated
  \textsc{lari}, following terminology used
  in~\cite{MR0213413,MR3393453}. Thus, a \textsc{lari} is a triple
  $(f,r,\varepsilon)$, where $f\colon A\to B$ and $r\colon B\to A$ are
  morphisms and $\varepsilon\colon f\cdot r\Rightarrow 1_B$ is a
  2-cell, satisfying
  $ r\cdot f=1_A$, $\varepsilon\cdot f=1$, $r\cdot\varepsilon =1$.
  A morphism
  of \textsc{lari}s is a morphism between the underlying morphisms in \C\ that commutes with the
  right adjoints and the
  corresponding counits; explicitly, a morphism
  $(f,r,\varepsilon)\to(f',r',\varepsilon')$ is a morphism
  $(h,k)\colon f\to f'$ in $\mathcal{A}^\two$ that satisfies $r'\cdot
  k=h\cdot r$ and $k\cdot\varepsilon=\varepsilon\cdot k$. Finally, a 2-cell
  between two morphisms $(h,k)\Rightarrow(\bar h,\bar k)$ is simply a 2-cell in
  $\mathcal{A}^\two$.
  We have, for any \V-category $\mathcal{A}$, a \V-category
  $\mathbf{Lari}(\mathcal{A})$ with objects \textsc{lari}s in $\mathcal{A}$ and
  morphisms and 2-cells as described above.

  Clearly, \textsc{lari}s compose, in the sense that if $f\colon A\to B$ and
  $g\colon B\to C$ are morphisms with \textsc{lari} structures
  $(f,r,\varepsilon)$ and $(g,s,\varphi)$, then $(g\cdot f,r\cdot
  s,\varepsilon(r\cdot\varphi\cdot f))$ is a \textsc{lari}. Furthermore, identity
  morphisms have an obvious \textsc{lari} structure. As a consequence there is an internal
  category  $\mathbf{Lari}(\mathcal{A})\rightrightarrows\mathcal{A}$ in \VCAT,
  denoted by $\mathbb{L}\mathbf{ari}(\mathcal{A})$, and an obvious internal
  functor $\mathbb{L}\mathbf{ari}(\mathcal{A})\to\Sq(\mathcal{A})$.

  Dually, a morphism $f\colon A\to B$ in a \V-category $\mathcal{A}$ equipped
  with a left adjoint whose unit is an identity may be called a \emph{right adjoint
  left inverse}, or \textsc{rali}. A morphism of \textsc{rali}s is a morphism in
  $\mathcal{A}^\two$ between the underlying morphisms that commutes with the left
  adjoints and the units. There is a internal category
  $\mathbb{R}\mathbf{ali}(\mathcal{A})$ in \VCAT\ and a forgetful internal
  functor $\mathbb{R}\mathbf{ali}(\mathcal{A})\to\Sq(\mathcal{A})$.

  Each one of the internal categories $\mathbb{L}\mathbf{ari}(\mathcal{A})$ and
  $\mathbb{R}\mathbf{ali}(\mathcal{A})$ over $\Sq(\mathcal{A})$ are induced by
  an
  \textsc{awfs} under weak assumptions on the \V-category $\mathcal{A}$. Recall
  that, as part of our assumptions, the category $\V\subseteq\Cat$ contains the
  arrow category $\two$ and is closed under limits so it makes sense to speak of
  comma-objects in \V; these comma-objects are just comma-categories. The lax limit of a morphism $f\colon X\to Y$ in \V\ is another
  name for the comma-category $f/Y$. A \emph{lax limit} of a morphism $f\colon A\to B$ in
  a \V-category $\mathcal{A}$ is a diagram as the one depicted on the left, that
  is sent by each representable $\mathcal{A}(C,-)$ to a lax limit in \V.
  A \emph{lax colimit} of $f$ is a diagram as shown on the right that is sent by each
  $\mathcal{A}(-,C)$ to a lax limit in \V.
  \begin{equation}
    \label{eq:15}
    \xymatrixrowsep{.1cm}
    \xymatrixcolsep{1.8cm}
    \diagram
    &A\ar[dd]^f\\
    f/B\ar[ur]\ar[dr]\rtwocell<\omit>&\\
    &B
    \enddiagram
    \qquad
    \diagram
    A\ar[dd]_f\ar[dr]&\\
    {}\rtwocell<\omit>{^}&\col_\ell f\\
    B\ar[ur]&
    \enddiagram
  \end{equation}

  If $\mathcal{A}$ has lax limits of morphisms, then
  $\mathbb{L}\mathbf{ari}(\mathcal{A})\cong
  \mathsf{L}\text-\mathbb{C}\mathrm{oalg}$, where $(\mathsf{L},\mathsf{R})$ are
  given on a morphism $f\colon A\to B$ in the following way: $L(f)\colon A\to
  f/B$ is the
  morphism that corresponds to the identity 2-cell $f\Rightarrow f$, and $R(f)\colon
  f/B\to B$ is the projection; see~\cite[\S 3.3]{Clementino2016458}.

  If $\mathcal{A}$ has lax colimits of morphisms, then there is a \V-enriched \textsc{awfs}
  $(\mathsf{E},\mathsf{M})$ on $\mathcal{A}$ such that
  $\mathbb{R}\mathbf{ali}(\mathcal{A})\cong\mathsf{M}\text-\mathbb{A}\mathrm{lg}$,
  where $(\mathsf{E},\mathsf{M})$ is given on a morphism $f\colon A\to B$ in the
  following way: $E(f)\colon A\to \col_\ell f$ is the coprojection and
  $M(f)\colon\col_\ell f\to B$ is the morphism corresponding to the identity
  2-cell $f\Rightarrow f$.

\section{Categories of lifting operations}
\label{sec:cofibrant-generation}

Classically, a \textsc{wfs}
$(\mathcal{L},\mathcal{R})$ on a category $\mathcal{C}$ is cofibrantly generated
by a set of morphisms $\mathcal{J}$ if $\mathcal{R}$ is the family of morphisms
that have the right lifting operation with respect to each member of
$\mathcal{J}$. Quillen's small object argument proves that each set of morphisms
$\mathcal{J}$ cofibrantly generates a \textsc{wfs} in categories
that satisfy certain smallness and cocompleteness conditions. In the context of
\textsc{awfs}s, right lifting properties are replaced by lifting operations,
whose definition we recall below, and the set of morphisms $\mathcal{J}$ can be
replaced by a category $\mathcal{J}$ and a functor
$U\colon\mathcal{J}\to\mathcal{C}^{\two}$.

A previous version of this manuscript used a slightly different
exposition of enriched \textsc{awfs}s from that of \cite{MR3393453}. As
our main concern here is to study the cofibrant \kz-generation of \textsc{awfs}s
instead of providing an overarching exposition of enriched \textsc{awfs}s, we
shall follow \cite[\S 8]{MR3393453} and save space. The only difference in
our, admittedly short, exposition is the fact that
all the \textsc{awfs}s will be $\V$-enriched.


A lifting operation against $U$ on $f$
can be described as a family
\begin{equation}
  \label{eq:33}
  \mathcal{C}^\two(Uj,f)\longrightarrow\mathcal{C}(\cod Uj,\dom f),
\end{equation}
natural in $j\in\mathcal{J}$, each of which is a section to the dashed morphism
into the pullback displayed below.
\begin{equation}
  \label{eq:13}
  \diagram
  \mathcal{C}(\cod Uj,\dom f)\ar@{-->}[r]\ar@/^15pt/[rr]^{\mathcal{C}(Uj,1)}
  \ar@/_/[dr]_{\mathcal{C}(1,f)}
  &\mathcal{C}^\two (Uj,f)\ar[r]\ar[d]\ar@{}[dr]|{\mathrm{pb}}&
  \mathcal{C}(\dom Uj,\dom f)\ar[d]^{\mathcal{C}(1,f)}\\
  &\mathcal{C}(\cod Uj,\cod f)\ar[r]_-{\mathcal{C}(Uj,1)}&
  \mathcal{C}(\dom Uj,\cod f)
  \enddiagram
\end{equation}
Morphisms of $\mathcal{C}$ equipped with a lifting operation against $U$ form a
category $\mathcal{J}^\pitchfork$ that has an obvious forgetful functor
$U^\pitchfork\colon\mathcal{J}^\pitchfork\to\mathcal{C}^\two$ that forgets the
lifting operation.

In order to put these ideas in a setting that allows for an easy generalisation
to the enriched case and takes account of the double category structures, we
shall follow~\cite{MR3393453} in the utilisation of fibre squares. If
$\mathcal{A}$ is a \V-category with enriched pullbacks, there is a \V-functor $C\colon
\mathcal{A}^{\two\times\two}\to\mathcal{A}^\two$ that sends a square to the
comparison morphism into the associated pullback square.
\begin{equation}
  \label{eq:80}
  \xymatrixcolsep{1.2cm}
  \xymatrixrowsep{1.2cm}
  \diagram
  \cdot\ar[r]^h\ar[d]_f&\cdot\ar[d]^g\\
  \cdot\ar[r]^k&\cdot
  \enddiagram
  \longmapsto
  \xymatrixcolsep{.6cm}
  \xymatrixrowsep{.6cm}
  \diagram
  \cdot\ar@{..>}^{C(f,g,h,k)}[dr]&&\\
  &\cdot\ar[r]\ar[d]\ar@{}[dr]|{\mathrm{pb}}&\cdot\ar[d]^g\\
  &\cdot\ar[r]^k&\cdot
  \enddiagram
\end{equation}
If $\mathcal{A}$ carries an enriched \textsc{awfs} $(\mathsf{E},\mathsf{M})$,
then the $\V$-category of fibre squares is defined by the following
pullback.
\begin{equation}
  \label{eq:133}
  \diagram
  \operatorname{Fibre}_{\mathsf{M}}(\mathcal{A})
  \ar[d]\ar[r]&
  \mathsf{M}\text-\mathrm{Alg}\ar[d]\\
  \mathcal{A}^{\two\times\two}\ar[r]^-C&
  \mathcal{A}^\two
  \enddiagram
\end{equation}
Its objects are squares as in~\eqref{eq:80} with an $\mathsf{M}$-algebra
structure on $C(f,g,h,k)$, called \emph{fibre squares}. It is not hard to show
that if we ``paste'' two of these squares, or set them side by side, the resulting outer
square is also a fibre square (the proof depends on the fact that the codomain
functor $\mathsf{M}\text-\mathrm{Alg}\to\mathcal{A}$ is a discrete pullback-fibration). A symmetry
argument ensures that two fibre squares, one stacked on top of the other, yields
a new fibre square. 
By the
aforementioned, there are two internal category structures on
$\mathrm{Fibre}_{\mathsf{M}}(\mathcal{A})$, one given by pasting squares side by
side, and the
other by pasting squares vertically.


We now assume that \V\ is equipped with a \V-enriched \textsc{awfs}
$(\mathsf{E},\mathsf{M})$. Any $\V$-functor category
$[\mathcal{J}^{\mathrm{op}},\V]$ has an induced \textsc{awfs}, defined
pointwise, that we still call $(\mathsf{E},\mathsf{M})$.

Given a \V-functor $U\colon \mathcal{J}\to\C^\two$, there is an induced
\V-functor
$\hat U\colon
\mathcal{C}^\two\to[\mathcal{J}^{\mathrm{op}},\V]^{\two\times\two}$ that sends
$f$ to the outer square in~\eqref{eq:13} (where $j$ is a variable). This is in
fact an internal functor
$\hat U\colon\Sq(C)\to\Sq([\mathcal{J}^{\mathrm{op}},\V]^\two)$.  The
\V-category $\mathcal{J}^{\pitchfork_{\mathsf{M}}}$ is defined by the pullback
diagram on the left below. The \V-functor $W_U=\hat U C$ sends $f\colon A\to B$
to the dashed transformation in~\eqref{eq:13}. An object of
$\mathcal{J}^{\pitchfork_{M}}$ is a morphism $f\colon A\to B$ with an
$\mathsf{M}$-algebra structure on each dashed arrow as in~\eqref{eq:13} that
vary $\V$-naturally with $j\in \mathcal{J}$.
\begin{equation}
  \label{eq:131}
  \diagram
  \mathcal{J}^{\pitchfork_{\mathsf{M}}}
  \ar[d]_{U^{\pitchfork_{\mathsf{M}}}}\ar[r]
  &
  \mathrm{Fibre}_{\mathsf{M}}([\mathcal{J}^{\mathrm{op}},\V])
  \ar[d]\ar[r]
  &
  \mathsf{M}\text-\mathrm{Alg}
  \ar[d]
  \\
  \C^\two
  \ar[r]^-{\hat U}
  &
  [\mathcal{J}^{\mathrm{op}},\V]^{\two\times\two}
  \ar[r]^-C
  &
  [\mathcal{J}^{\mathrm{op}},\V]^\two
  \enddiagram
  \qquad
  \diagram
  \mathcal{J}^{\pitchfork_{\mathsf{M}}}\ar[d]\ar[r]&
  \mathbb{F}\mathrm{ibre}_{\mathsf{M}}[\mathcal{J}^{\mathrm{op}},\V]\ar[d]\\
  \Sq(\C)\ar[r]&
  \Sq([\mathcal{J}^{\mathrm{op}},\V]^\two)
  \enddiagram
\end{equation}
Since $\hat U$ is part of an internal functor $\Sq(\C)\to
\Sq([\mathcal{J}^{\mathrm{op}},\V]^\two)$, it is immediate that
$\mathcal{J}^{\pitchfork_{\mathsf{M}}}$ is the arrow part of an internal
category, with object part $\C$, and that it fits in a pullback square of internal
functors displayed on the right above.

\begin{ex}
  \label{rmk:3}
  Suppose that $\V=\mathbf{Set}$ and that $\mathsf{M}$ is the free split
  epimorphism monad on $\mathbf{Set}^{\two}$, which is explicitly given by sending
  $f\colon A\to B$ to $\binom{f}{1} \colon A+B\to B$.
  A lifting operation on a morphism $g\colon A\to B$ in a category $\mathcal{C}$
  against a functor $U\colon \mathcal{J}\to\mathcal{C}^\two$ is a choice of
  section $\phi_j$ for each canonical $\mathcal{C}(\cod
  Uj,A)\to\mathcal{C}^\two(Uj,g)$ in such a way that the family $\phi_j$ is
  natural in $j$.
  \begin{equation}
    \label{eq:54}
    \diagram
    \dom Uj\ar[d]_{Uj}\ar[r]^-h&
    A\ar[d]^g\\
    \cod Uj\ar[r]_-k \ar@{..>}[ur]|{\phi_j(h,k)}&
    B
    \enddiagram
  \end{equation}
  For this particular monad $\mathsf{M}$, we will suppress the suffix in the
  notation $\mathcal{J}^{\pitchfork_{\mathsf{M}}}$ and simply write
  $\mathcal{J}^{\pitchfork}$, as to coincide with the notation used
  in~\cite{MR2506256}.
\end{ex}

Assume that $\mathcal{J}$ is part of an internal category in \VCat, say
$\mathbb{J}=(\mathcal{J}\rightrightarrows \mathcal{J}_0)$, with an internal
functor $(U,U_0)\colon \mathbb{J}\to\Sq(\C)$.
We shall now describe the internal category
$\mathbb{J}^{\sperp_{\mathsf{M}}}$ over $\Sq(\C)$.
There are two internal functors $\mathcal{J}^{\pitchfork_{\mathsf{M}}}\to
(\mathcal{J}\times_{\mathcal{J}_0}\mathcal{J})^{\pitchfork_{\mathsf{M}}}$, which
at the level of object of objects are the identity
$\mathcal{J}_0\to\mathcal{J}_0$. At the level of object of arrows, one of these
internal functors corresponds to the \V-functor
$\mathcal{J}^{\pitchfork_{\mathsf{M}}}\to
\mathrm{Fibre}_{\mathsf{M}}[(\mathcal{J}\times_{\mathcal{J}_0}\mathcal{J})^{\mathrm{op}},\V]$
that sends $f$ to the pasted fibre square below. Here the fibre square structure is
the one given by the pasting of fibre squares. The second internal functor from
$\mathcal{J}^{\pitchfork_{\mathsf{M}}}$ to
$(\mathcal{J}\times_{\mathcal{J}_0}\mathcal{J})^{\pitchfork_{\mathsf{M}}}$
corresponds to the \V\ functor
$\mathcal{J}^{\pitchfork_{\mathsf{M}}}\to
\mathrm{Fibre}_{\mathsf{M}}[(\mathcal{J}\times_{\mathcal{J}_0}\mathcal{J})^{\mathrm{op}},\V]$
that sends $f$ to the fibre square of the outer rectangle.
\begin{equation}
  \label{eq:4}
  \xymatrixcolsep{1.5cm}
  \xymatrixrowsep{.5cm}
  \diagram
  \C(\cod Uj,A)
  \ar[d]_{\C(1,f)}\ar[r]^{\C(Uj,1)}&
  \C(\cod Ui,A)
  \ar[r]^{\C(Ui,1)}\ar[d]^{\C(1,f)}&
  \C(\dom Ui,A)
  \ar[d]^{\C(1,f)}\\
  \C(\cod Uj,B)\ar[r]^{\C(Uj,1)}&
  \C(\cod Ui,B)\ar[r]^{\C(Ui,1)}&
  \C(\dom Ui,B)
  \enddiagram
\end{equation}
The internal category $\mathbb{J}^{\sperp_{\mathsf{M}}}$ is the equaliser of
this pair of internal functors.
\begin{equation}
  \label{eq:5}
  \mathbb{J}^{\sperp_{\mathsf{M}}}\to \mathcal{J}^{\pitchfork_{\mathsf{M}}}
  \rightrightarrows
  (\mathcal{J}\times_{\mathcal{J}_0}\mathcal{J})^{\pitchfork_{\mathsf{M}}}
\end{equation}

Perhaps the easiest way to understand $\mathbb{J}^{\sperp_{\mathsf{M}}}$ is to have a
quick look at $\mathbb{J}^{\sperp}$, that is, the case when $\mathsf{M}$ is
the monad whose algebras are split epimorphisms.
\begin{ex}
  \label{ex:12}
If $\mathbb{J}$ is a double category, with underlying graph
$\mathcal{J}\rightrightarrows \mathcal{J}_0$, and $\mathbb{U}=(U,U_0)\colon
\mathbb{J}\to\Sq(\mathcal{C})$ is a double functor, we may consider
$\mathcal{J}^\pitchfork$, whose objects are morphisms $g$ of $\mathcal{C}$
equipped with a lifting operation $\phi$ against $U$, as explicitly described in
Example~\ref{rmk:3}. It is natural to ask for the following extra condition: if
$i$ and $j$ are composable vertical morphisms of $\mathbb{J}$, then
$\phi_j(\phi_i(h,k\cdot Uj),k)$ must be equal to $\phi_{j\bullet i}(h,k)$, the
diagonal filler given by the lifting operation against the vertical composition
of $i$ and $j$. This condition always holds in orthogonal
factorisation systems, ie when the diagonal fillers are unique, but it is in
general untrue.
\begin{equation}
  \label{eq:55}
  \xymatrixrowsep{.4cm}
  \xymatrixcolsep{4.5cm}
  \diagram
  \cdot\ar[d]_{Ui}\ar[r]^-h&
  \cdot\ar[dd]^g\\
  \cdot\ar[d]_{Uj}
  \ar@{..>}[ur]|(.35){\phi_i(h,k\cdot Uj)}
  &\\
  \cdot\ar[r]_-k
  \ar@{..>}[uur]|(.4){\phi_j(\phi_i(h,k\cdot Uj),k)}
  &
  \cdot
  \enddiagram
  \qquad
  \xymatrixcolsep{3cm}
  \diagram
  \cdot\ar[d]_{Ui}\ar[r]^-h&
  \cdot\ar[dd]^g\\
  \cdot\ar[d]_{Uj}
  &\\
  \cdot\ar[r]_-k
  \ar@{..>}[uur]|(.5){\phi_{j\bullet i}(h,k)}
  &
  \cdot
  \enddiagram
\end{equation}
The category $\mathbb{J}^\sperp$ is the full subcategory of
$\mathcal{J}^\pitchfork$ whose objects satisfy the compatibility condition
described above.
\end{ex}

\begin{ex}
  \label{ex:13}
  Suppose given a functorial factorisation on a category $\C$, with associated
  domain-preserving copointed endofunctor $(L,\Phi)$ and associated
  codomain-preserving pointed endofunctor $(R,\Lambda)$. Then, each
  $(R,\Lambda)$-algebra $(p,1)\colon Rg\to g$ induces an object $(g,\phi)$ of
  $(L,\Phi)\text-\mathrm{Coalg}^\pitchfork$ in the following way. If
  $(1,s)\colon f\to Lf$ is an $(L,\Phi)$-coalgebra, then
  \begin{equation}
    \label{eq:127}
    {\xymatrixcolsep{2cm}
      \xymatrixrowsep{1.4cm}
      \diagram
      {\cdot}
      \ar[r]^-{h}\ar[d]_{f}
      &
      {\cdot}
      \ar[d]^{g}
      \\
      {\cdot}
      \ar[r]_-{k}\ar@{..>}[ur]|{\phi_{(g,s)}(h,k)}
      &
      {\cdot}
      \enddiagram}
    \quad=\quad
    \xymatrixrowsep{.6cm}
    \xymatrixcolsep{2cm}
    \diagram
    \cdot\ar[dd]_f\ar@{=}[r]&
    \cdot\ar[d]|{Lf}\ar[r]^h&
    \cdot\ar[d]|{Lg}\ar@{=}[r]&
    \cdot\ar[dd]^g
    \\
    &\cdot\ar[r]^-{K(h,k)}\ar[d]|{Rf}&
    \cdot\ar[d]|{Rg}\ar[ur]_p&
    \\
    \cdot\ar[ur]^s\ar@{=}[r]&
    \cdot\ar[r]_-k&
    \cdot\ar@{=}[r]&
    \cdot
    \enddiagram
  \end{equation}
  This assignment can easily seen to be part of a functor over $\C^\two$
  \begin{equation}
    \label{eq:93}
    (R,\Lambda)\text-\mathrm{Alg}\longrightarrow (L,\Phi)\text-\mathrm{Coalg}^{\pitchfork}.
  \end{equation}

  In the case when $(R,\Lambda)$ and $(\mathsf{L},\Phi)$ underlie an
  \textsc{awfs} $(\mathsf{L},\mathsf{R})$, the inclusions
  $\mathsf{R}\text-\mathrm{Alg}\hookrightarrow (R,\Lambda)\text-\mathrm{Alg}$
  and $\mathsf{L}\text-\mathrm{Coalg}\hookrightarrow
  (L,\Phi)\text-\mathrm{Coalg}$ induce, together with~\eqref{eq:93}, a functor
  over $\C^\two$
  $ \mathsf{R}\text-\mathrm{Alg}\hookrightarrow
    (R,\Lambda)\text-\mathrm{Alg}\longrightarrow
    (L,\Phi)\text-\mathrm{Coalg}^{\pitchfork}
    \longrightarrow \mathsf{L}\text-\mathrm{Coalg}^{\pitchfork}$
  that is simply a restriction of the lifting operation described above.
\end{ex}

One can say a few words regarding the behaviour of $U^{\pitchfork_{\mathsf{M}}}$
and (co)limits.
\begin{lemma}
  \label{l:20}
  Suppose given a pullback diagram of \V-functors as displayed.
  \begin{equation}
    \xymatrix@R=.3cm@C=.8cm{
      \mathcal{P}\ar[d]_P\ar[r]\ar@{}[dr]&
      \mathcal{B}\ar[d]^G\\
      \mathcal{E}\ar[r]^-F&
      \mathcal{F}
    }
  \end{equation}
  Suppose that $D\colon \mathcal{D}\to\mathcal{P}$ is a \V-functor and that $PD$
  has a (co)limit weighted by the weight $\varphi$. Then $P$ creates this
  (co)limit if $F$ preserves it and $G$ creates (co)limits
  of functors with domain $\mathcal{D}$ weighted by $\varphi$.
\end{lemma}
Since $U^{\pitchfork_{\mathsf{M}}}$ is the pullback of the monadic
$\mathsf{M}\text-\mathrm{Alg}\to [\mathcal{J}^{\mathrm{op}},\V]^\two$ along
$W_U\colon\C^\two\to[\mathcal{J}^{\mathrm{op}},\V]^\two$, we readily obtain:
\begin{lemma}
  \label{l:14}
  Let $U\colon\mathcal{J}\to\C^\two$ be a \V-functor with $\mathcal{J}$ small. Then:
  \begin{enumerate*}
  \item \label{item:23} $W_U$ preserves any limit that may exist in $\C^\two$
    and it has a left adjoint if $\C$ is cocomplete.
  \item \label{item:16} $U^{\pitchfork_{\mathsf{M}}}$ creates limits.
  \item \label{item:19} ${U}^{\pitchfork_{\mathsf{M}}}$ creates
    $U^{\pitchfork_\mathsf{M}}$-split coequalisers.
  \end{enumerate*}
\end{lemma}%

Since, up to isomorphism, the object part of the internal functor
$U^{\sperp_{\mathsf{M}}}\colon\mathbb{J}^{\sperp_{\mathsf{M}}}\to\Sq(\C)$ is
the identity, we will use the same notation for the arrow part
$U^{\sperp_{\mathsf{M}}}\colon\mathbb{J}^{\sperp_{\mathsf{M}}}\to\C^\two$.
\begin{lemma}
  \label{l:17}
  \label{cor:5}
  The \V-functor $U^{\sperp_{\mathsf{M}}}$ enjoys the following properties.
  \begin{enumerate}
  \item It creates limits and $U^{\sperp_{\mathsf{M}}}$-split coequalisers.
  \item It is monadic if it has a left adjoint.
    If the base \V-category $\C$ has cotensor products with $\two$,
    then it suffices that the underlying ordinary functor of
    $U^{\sperp_{\mathsf{M}}}$ should have a left adjoint.
  \end{enumerate}
\end{lemma}
\begin{proof}
  The enriched version of Beck's monadicity theorem together Lemma~\ref{l:17}
  mean that monadicity is guaranteed by the existence of a \V-enriched left
  adjoint. If $\C$ has a cotensor products with $\two$, then
  $U^{\sperp_{\mathsf{M}}}$ creates them, and therefore
  $\mathbb{J}^{\sperp_{\mathsf{M}}}$ has and $U^{\sperp_{\mathsf{M}}}$ preserves
  cotensor products with $\two$. In these circumstances, a left adjoint for the
  underlying ordinary functor induces an enriched left adjoint;
  see~\cite[Prop.~3.1]{BKP}.
\end{proof}

One of the observations of~\cite{MR3393453} is that, for any \textsc{awfs}
$(\mathsf{L},\mathsf{R})$, the functor
$\mathsf{R}\text-\mathrm{Alg}\to\mathsf{L}\text-\mathrm{Coalg}^\pitchfork$
introduced in~\cite{MR2506256} that
expresses the fact that each $\mathsf{R}$ algebra has a canonical lifting
operation against $\mathsf{L}$-coalgebras, co-restricts to an isomorphism
\begin{equation}
  \label{eq:67}
  \mathsf{R}\text-\mathrm{Alg}\cong\mathsf{L}\text-\mathrm{Coalg}^\sperp.
\end{equation}
The existence of this isomorphism can easily be extended from ordinary categories
to 2-categories, or in our case, to categories enriched in our base of
enrichment $\V\subseteq\Cat$ satisfying our blanket hypotheses.




\section{Lax orthogonal factorisation systems}
\label{sec:lax-orth-fact}

This section is a short exposition of the basic definitions of lax orthogonal
\textsc{awfs}s of~\cite{Clementino2016458}, beginning with lax idempotent 2-monads
--~which are closely related to Kock--Z\"oberlein doctrines~\cite{Kock:KZmonads,MR0424896}.

Lax orthogonal \textsc{awfs}s were introduced in~\cite{Clementino2016458}, in
the context of 2-categories. The modification to our framework of categories
enriched in a category $\V\subseteq\Cat$ that satisfies our blanket conditions
is trivial.
\begin{df}
  \label{df:1}
  A \emph{lax orthogonal factorisation system} (\textsc{lofs}) on \C\ is a
  \V-enriched \textsc{awfs} $(\mathsf{L},\mathsf{R})$ whose 2-comonad
  $\mathsf{L}$ and 2-monad $\mathsf{R}$ are lax idempotent. Equivalently, as
  shown in~\cite[\S 4]{Clementino2016458}, either $\mathsf{L}$ or $\mathsf{R}$
  should be lax idempotent.
\end{df}
\begin{ex}
  \label{ex:3}
  The two \textsc{awfs}s of \S \ref{sec:intern-categ-laris} are \textsc{lofs}s.
\end{ex}
\begin{Ass}
  For the rest of the section we equip \V\ with the \textsc{lofs}
  $(\mathsf{E},\mathsf{M})$ described in \S \ref{sec:intern-categ-laris}
  --~so $\mathsf{M}$
  is the free \textsc{rali} \V-monad.
\end{Ass}

Lax orthogonality is closely related to the notion of \kz-lifting
operation~\cite[\S 5]{Clementino2016458}. If $f$ and $g$ are morphisms in \C, a
\kz-lifting operation from $f$ to $g$ is a \textsc{rali} structure on the dashed morphism
induced by the universal property of pullbacks.
If a \kz-lifting operation from $f$ to $g$ exists we say that \emph{$f$ and
  $g$ are \slkz-orthogonal}.
\begin{equation}
  \label{eq:23}
  \diagram
  \C(\cod f,\dom g)\ar@/^15pt/[rr]^{\C(f,1)} \ar@/_/[dr]_{\C(1,g)}
  \ar@{-->}[r]
  &\C^\two(f,g)\ar[r]\ar[d]\ar@{}[dr]|{\mathrm{pb}}&
  \C(\dom f,\dom g)\ar[d]^{\C(1,g)}\\
  &\C(\cod f,\cod g)\ar[r]^-{\C(f,1)}&
  \C(\dom f,\cod g)
  \enddiagram
\end{equation}

Slightly more generally, a \kz-lifting operation from a \V-functor $U\colon
\mathcal{A}\to\C^\two$ to another $V\colon\mathcal{B}\to\C^\two$ is a \textsc{rali}
structure on the comparison \V-natural transformation $\C(\cod U,\dom
V)\Rightarrow \C^\two(U,V)$ defined in a similar fashion to the previous
paragraph. In terms of diagonal fillers, a \kz-lifting operation from $U$ to $V$
can be described as an assignment of a diagonal filler $d(h,k)$ for each
morphism $(h,k)\colon Ua\to Vb$ in $\C^\two$, that is, for each commutative
square of the form depicted.
\begin{equation}
  \label{eq:65}
  \diagram
  \cdot\ar[d]_{Ua}\ar[r]^h&
  \cdot\ar[d]^{Vb}\\
  \cdot\ar@{..>}[ur]|{d(h,k)}\ar[r]_k&
  \cdot
  \enddiagram
\end{equation}
These diagonals must be \V-natural in $a\in\mathcal{A}$ and
$b\in\mathcal{B}$. Furthermore, if $e$ is a morphism parallel to $d(h,k)$, there
must be a bijection between the following two sets of 2-cells:
\begin{itemize}
\item 2-cells $\gamma\colon d(h,k)\Rightarrow e$; and
\item pairs of 2-cells $\alpha\colon h\Rightarrow e\cdot Ua$ and $\beta\colon
  k\Rightarrow Vb\cdot e$ such that $Vb\cdot\alpha=\beta\cdot Ua$.
\end{itemize}
The bijection must be given by $\gamma\mapsto (\gamma\cdot Ua,Vb\cdot
\gamma)$. More details can be found in~\cite{Clementino2016458}.
Clearly, \kz-lifting operations are unique up to isomorphism. 


\begin{theorem}[{\cite[Thm.~6.6]{Clementino2016458}}]
  \label{thm:6}
  A \V-enriched {\normalfont\textsl{\textsc{awfs}}} $(\mathsf{L},\mathsf{R})$ on
  \C\ is a {\normalfont\textsl{\textsc{lofs}}} if and only if there exists a
  \slkz-lifting operation from the
  forgetful $U\colon\mathsf{L}\text-\mathrm{Coalg}\to\C^\two$ to the forgetful
  $V\colon\mathsf{R}\text-\mathrm{Alg}\to\C^\two$.
\end{theorem}

\begin{notation}
  \label{not:2}
  When $\mathcal{V}$ is equipped with the \textsc{awfs}
  $(\mathsf{E},\mathsf{M})$ whose $\mathsf{M}$-algebras are the \textsc{rali}s
  in \V --~see \S \ref{sec:intern-categ-laris}~-- then $\mathbb{J}^{\sperp_{\mathsf{M}}}$ and
  $\mathcal{J}^{\pitchfork_{\mathsf{M}}}$ will be denoted by
  $\mathbb{J}^{\sperp_{\mathkz}}$ and $\mathcal{J}^{\pitchfork_{\mathkz}}$, respectively. The latter
  fits in the following pullback square.
  and is the universal \V-category over $\C^\two$ equipped with a \kz-lifting
  operation from $U$ to $U^{\pitchfork_{\mathkz}}$ --~see~\cite[\S
  6]{Clementino2016458}.
  \begin{equation}
    \label{eq:26}
    \xymatrixrowsep{.5cm}
    \diagram
    \mathcal{J}^{\pitchfork_{\mathkz}}\ar[d]_{U^{\pitchfork_{\mathkz}}}\ar[r]
    &
    \mathbf{Rali}[\mathcal{J}^{\mathrm{op}},\V]\ar[d]
    \\
    \C^{\two}\ar[r]^-{W_U}
    &
    [\mathcal{J}^{\mathrm{op}},\V]^\two
    \enddiagram
  \end{equation}
\end{notation}

\begin{theorem}
  \label{thm:7}
  A \V-enriched {\normalfont\textsl{\textsc{awfs}}} $(\mathsf{L},\mathsf{R})$ on a \V-category \C\ is
  lax orthogonal if and only if\/
  $\mathsf{R}\text-\mathbb{A}\mathrm{lg}\cong\mathsf{L}\text-\mathbb{C}\mathrm{oalg}^{\sperp_{\mathkz}}$
  over $\C^\two$.
\end{theorem}
\begin{proof}
  We use above notations.
  First observe that, for any \V-category $\mathcal{A}$ over $\C^\two$ the
  square displayed on the left is a pullback because \textsc{rali}s are split epis.
  Moreover, the all four $\V$-functors are full and faithful and injective on objects.
  \begin{equation}
    \label{eq:1}
    \xymatrixrowsep{.5cm}
    \diagram
    \mathcal{A}^{\sperp_{\mathkz}}\ar[r]\ar[d]&
    \mathcal{A}^{\pitchfork_{\mathkz}}\ar[d]\\
    \mathcal{A}^{\sperp}\ar[r]&
    \mathcal{A}^\pitchfork
    \enddiagram
    \qquad
    \diagram
    \mathsf{R}\text-\mathrm{Alg}\ar[r]^-{\mathrm{ff}}\ar[d]_\cong&
    \mathsf{L}\text-\mathrm{Coalg}^{\pitchfork_{\mathkz}}\ar[d]^{\mathrm{ff}}\\
    \mathsf{L}\text-\mathrm{Coalg}^{\sperp}\ar[r]^-{\mathrm{ff}}&
    \mathsf{L}\text-\mathrm{Coalg}^{\pitchfork}
    \enddiagram
  \end{equation}
  Theorem~6.6 of~\cite{Clementino2016458} proves that our \textsc{awfs} is lax
  orthogonal if and only if there exists a \V-functor
  $\mathsf{R}\text-\mathrm{Alg}\to\mathsf{L}\text-\mathrm{Coalg}^{\pitchfork_{\mathkz}}$,
  commuting with the forgetful functors into $\C^\two$; this \V-functor is, moreover, full
  and faithful.  Then, if the \textsc{awfs} is lax orthogonal, there is a
  commutative square displayed on the right above, where the isomorphism
  $\mathsf{R}\text-\mathrm{Alg}\cong\mathsf{L}\text-\mathrm{Coalg}^\sperp$ over
  $\C^\two$ is that mentioned in~\eqref{eq:67}. The fact that this diagram is a
  pullback square will follow from the following general argument.

  A commutative square of full and faithful \V-functors is a pullback square if
  and only if it is a pullback at the level of objects. Any diagram of injective
  functions between sets
  \begin{equation}
    \diagram
    \cdot\ar[d]_\cong\ar@{>->}[r]&\cdot\ar@{>->}[d]\\
    \cdot\ar@{>->}[r]&\cdot
    \enddiagram
  \end{equation}
  is a pullback square if the marked function is a bijection. These two facts
  show that the diagram on the right of~\eqref{eq:1} is a pullback.

  For the converse, an isomorphism
  $\mathsf{R}\text-\mathbb{A}\mathrm{lg}\cong\mathsf{L}\text-\mathbb{C}\mathrm{oalg}^{\sperp_{\mathkz}}$
  together with the inclusion
  $\mathsf{L}\text-\mathrm{Coalg}^{\sperp_{\mathkz}}\to\mathsf{L}\text-\mathrm{Coalg}^{\pitchfork_{\mathkz}}$
  gives a \V-functor
  $\mathsf{R}\text-\mathrm{Alg}\to\mathsf{L}\text-\mathrm{Coalg}^{\pitchfork_{\mathkz}}$,
  so the \textsc{awfs} is lax orthogonal by \cite[Thm.~6.6]{Clementino2016458}, concluding
  the proof.
\end{proof}

\section{Cofibrant KZ-generation}
\label{sec:cofibrant-generation-1}

A \textsc{wfs} $(\mathcal{L},\mathcal{R})$ on a category $\mathcal{C}$ is said
to be cofibrantly generated by a family of morphisms
$\mathcal{J}\subset\operatorname{Mor}\mathcal{C}$ if $\mathcal{R}$ consists of those
morphisms with the right lifting property against all members of
$\mathcal{J}$. Certain cocompleteness and smallness conditions on $\mathcal{C}$
guarantee that any small set of morphisms $\mathcal{J}$ cofibrantly generates a
--~unique~-- \textsc{wfs}. This result is usually called Quillen's small object
argument.
A notion of cofibrant generation for \textsc{awfs}s and the
corresponding small object argument appeared in~\cite{MR2506256} and was
later
built upon in~\cite{MR3393453}. The latter article also discussed cofibrant
generation of \textsc{awfs}s on enriched categories, of which we can now say a
few words. Our base of enrichment $\mathcal{V}\subseteq\Cat$ will be equipped with an
\textsc{awfs} $(\mathsf{E},\mathsf{M})$.

\begin{df}
\label{df:8}
Let \C\ be a \V-category, $\mathbb{J}$ an internal category in $\VCat$ and
$\mathbb{U}\colon \mathbb{J}\to\Sq(\C)$ an internal functor.
A \V-enriched \textsc{awfs} $(\mathsf{L},\mathsf{R})$ on $\C$ is
\emph{cofibrantly
  $(\mathsf{E},\mathsf{M})$-generated} by
$\mathbb{J}$ when there is an isomorphism of internal categories
$\mathsf{R}\text-\mathbb{A}\mathrm{lg}\cong \mathbb{J}^{\sperp_{\mathsf{M}}}$ over $\Sq(\C)$.
This is a straightforward modification of~\cite[\S 6.2]{MR3393453}.
\end{df}
The \V-enriched \textsc{awfs} cofibrantly generated by an internal category
$\mathbb{J}$ in \VCat\ exists if and only if $\mathbb{J}^{\sperp_{\mathsf{M}}}\to\C^\two$ is
monadic. This can be shown by modifying~\cite[Thm.~6]{MR3393453}
to the case of \V-categories.

\begin{df}
\label{df:9}
We say that $(\mathsf{L},\mathsf{R})$ is cofibrantly generated \emph{by a
  \V-category} $\mathcal{J}$ over $\C^\two$ if there is an isomorphism of
internal categories
$\mathsf{R}\text-\mathrm{Alg}\cong\mathcal{J}^{\pitchfork_{\mathsf{M}}}$ over
$\Sq(\C)$.
\end{df}

\begin{ex}
  \label{ex:11}
  When $\mathsf{M}$ is the \V-monad on $\V^\two$ given by
  ${M}(f)=1_{\cod(f)}$, then any \textsc{awfs} that is cofibrantly
  $(\mathsf{E},\mathsf{M})$-generated is orthogonal.
\end{ex}

\begin{df}
  \label{df:3}
The notion of cofibrant \textsc{kz}-generation of a \V-enriched \textsc{awfs} is
a special case of Definition~\ref{df:8} when $\V$ is equipped with the
\textsc{awfs} $(\mathsf{E},\mathsf{M})$, described in
\S \ref{sec:intern-categ-laris}, for which $\mathsf{M}$-algebras are
\textsc{rali}s.
\end{df}

\begin{ex}
  \label{ex:4}
  The \textsc{awfs} $(\mathsf{E},\mathsf{M})$ on \V\ whose $\mathsf{M}$-algebras
  are \textsc{rali}s is cofibrantly \kz-generated by the discrete \V-category
  $ \bigl(\mathbf{0}\to\mathbf{1}\bigr)\subset\V^\two$.
  So, $\mathbb{R}\mathbf{ali}(\Cat)\cong
  \bigl(\mathbf{0}\to\mathbf{1}\bigr)^{\pitchfork_{\mathkz}}$.
  Observe that $\mathbb{R}\mathbf{ali}(\Cat)$ is not of the form
  $\mathcal{J}^\pitchfork$ for a small category
  $\mathcal{J}$~\cite[Prop.~17]{MR3393453} but can be obtained as
  $\mathbb{J}^\pitchfork$ for a small double category
  $\mathbb{J}$~\cite[Prop.~19]{MR3393453}.
\end{ex}
\begin{ex}[Opfibrations]
  \label{ex:7}
  There is an isomorphism over \Cat
  \begin{equation}
    \label{eq:88}
    (\mathbf{1}\xrightarrow{0}\two)^{\pitchfork_{\mathkz}} \cong
    \mathbf{Opfib}_s
  \end{equation}
  with the 2-category of \emph{cloven opfibrations} and morphisms in $\C^\two$
  that strictly preserve the cleavages. Indeed, an object of the left hand side
  of~\eqref{eq:88} is a functor $g\colon A\to B$ for which
  \begin{equation}
    \label{eq:89}
    A^\two=\Cat(\two,A)\longrightarrow \Cat^\two(0,g)=g\downarrow 1_B \qquad
    (a\xrightarrow{\alpha}a')\mapsto(a,g(\alpha))
  \end{equation}
  has a \textsc{rali} structure. This structure is well-known to be equivalent
  to an opfibration structure on $g$. The opcartesian lifting of a morphism
  $\beta\colon g(a)\to b$ in $B$ is the diagonal filler $\phi(a,\beta)$ of the
  displayed square.
  The 2-monad of the induced \textsc{awfs} on \Cat\ is the free cloven
  opfibration 2-monad.
  \begin{equation}
    \diagram
    \mathbf{1}\ar[r]^-a\ar[d]_0&A\ar[d]^g\\
    \two\ar[r]_-\beta\ar@{..>}[ur]|{\phi(a,\beta)}&B
    \enddiagram
  \end{equation}
\end{ex}
\begin{ex}[Normal opfibrations]
  \label{ex:6}
  The 2-category of normal cloven opfibrations, by which we mean that the
  opcartesian lifting of an identity morphism is an identity morphism, can be
  cofibrantly \kz-generated by the subcategory of $\Cat^\two$ with objects
  $0\colon \mathbf{1}\to\two$ and $1\colon \mathbf{1}\to \mathbf{1}$, and a
  single non-identity morphism.
  \begin{equation}
    \label{eq:90}
    \diagram
    \mathbf{1}\ar[d]_0\ar[r]&\mathbf{1}\ar[d]\\
    \two\ar[r]&\mathbf{1}
    \enddiagram
  \end{equation}
\end{ex}
\begin{ex}[Split opfibrations]
  \label{ex:8}
  The 2-category of split opfibrations can be obtained by adding to the previous
  example the \kz-generating cofibration
  \begin{equation}
    \label{eq:92}
    \two=(0\to 1)\xrightarrow{\phantom{m}\delta_2\phantom{m}}(0\to 1\to 2)=\mathbf{3}
  \end{equation}
  that is the inclusion that misses out the object $2\in\mathbf{3}$.
  We require compatibility with the following squares, so the 2-category of
  split opfibrations is $\mathbb{J}^{\sperp_{\mathkz}}$, where $\mathbb{J}$ is
  the internal subcategory of $\Sq(\Cat)$ generated by the square of
  Example~\ref{ex:6} and
  the displayed squares --~where $\delta_i$ is the order-preserving
  inclusion that misses $i$.
  \begin{equation}
    \label{eq:91}
    \xymatrixcolsep{1.6cm}
    \diagram
    \mathbf{1}
    \ar[r]^-{1=\delta_0}
    \ar[d]_{0=\delta_1}
    &
    \two\ar[d]^{\delta_2}\\
    \two
    \ar[r]^-{\delta_0}
    &
    \mathbf{3}
    \enddiagram
    \qquad
    \xymatrixrowsep{.5cm}
    \diagram
    \mathbf{1}\ar@{=}[r]\ar[dd]_{\delta_1}&
    \mathbf{1}\ar[d]^{\delta_1}\\
    &\two\ar[d]^{\delta_2}\\
    \two\ar[r]^-{\delta_1}&
    \mathbf{3}
    \enddiagram
  \end{equation}
  If $g\colon A\to B$ is a functor, as seen in previous examples, a lifting
  operation against $\delta_1\colon\mathbf{1}\to\mathbf{2}$ is a choice of an
  opcartesian lifting $\bar\beta$ for each $\beta\colon g(a)\to b$ in $B$. A
  lifting operation against $\delta_2\colon\two\to\mathbf{3}$ is a choice, for each morphism $\alpha$ in
  $A$ and each morphism $\beta'$ in $B$ with $\dom\beta'=\cod g(\alpha)$, of an
  opcartesian lifting for $\beta'$. The compatibility with the square on the
  left above is the requirement that this opcartesian lifting should be equal to
  $\bar\beta$.
  \begin{equation}
    \label{eq:125}
    \diagram
    A\ar[d]_g&
    \cdot\ar[r]^\alpha&
    \cdot \ar@{..>}[r]^{\bar{\beta}'}&
    \cdot
    \\
    B&
    \cdot\ar[r]^\beta&\cdot\ar[r]^{\beta'}&\cdot
    \enddiagram
    \qquad
    \diagram
    \cdot\ar[r]^{\bar\beta}\ar@/_/[rr]_{\widebar{{\beta}'\cdot\beta}}&
    \cdot \ar@{..>}[r]^{\bar{\beta}'}&
    \cdot
    \\
    \cdot\ar[r]^\beta&\cdot\ar[r]^{\beta'}&\cdot
    \enddiagram
  \end{equation}
  The compatibility with the square on the right of~\eqref{eq:91} is the
  requirement that $\bar\beta'\cdot\bar\beta=\widebar{\beta'\cdot\beta}$.
  Together with the square of Example~\ref{ex:6} we obtain all the conditions of
  a split opfibration.
\end{ex}

\section{Existence in the locally presentable case}
\label{sec:exist-locally-pres}

The existence of the cofibrantly \kz-generated \textsc{awfs} over a small
internal
category $\mathbb{J}$ over $\Sq(\C)$ in \VCat\ can be easily deduced
in the case that the underlying category of $\C$ is locally presentable
using techniques from~\cite{MR3393453}. In \S \ref{sec:cofibr-kz-gener} we shall
study cofibrant \kz-generation in another context that encompasses examples
which, as the category of $T_0$ topological spaces, are not locally presentable.

\begin{prop}
  \label{prop:1}
  Suppose that our base of enrichment $\V\subseteq\Cat$ is locally presentable
  ~as an ordinary category~ and it is equipped with a $\V$-enriched
  {\normalfont{\textsl{\textsc{awfs}}}} $(\mathsf{E},\mathsf{M})$ whose
  underlying ordinary {\normalfont{\textsl{\textsc{awfs}}}} is accessible.
  Then, the cofibrantly $(\mathsf{E},\mathsf{M})$-generated {\normalfont\textsl{\textsc{awfs}}} on a small
  internal category $\mathbb{J}$ over $\Sq(\C)$ in \VCat\ exists if $\C$ has
  tensor products with $\two$ and the
  underlying category of $\C$ is locally presentable
\end{prop}
\begin{proof}
  We use \cite[\S 8.2]{MR3393453} to get the result at the level of underlying
  ordinary categories, and then argue why this implies the $\V$-enriched
  version.
  By the mere definitions in \S \ref{sec:cofibrant-generation},
  the underlying category of the $\V$-category
  $\mathbb{J}^{\sperp_{\mathsf{M}}}$ is the category
  $(\mathbb{J}_\uc)^{\sperp_{\mathsf{M}}}$ --~that appears implicitly in \cite[\S
  8.2]{MR3393453}. Therefore
  $(U^{\sperp_{\mathsf{M}}})_\uc\colon
  (\mathbb{J}^{\sperp_{M}})_\uc\to\C^\two_\uc$ has a left adjoint.
  The existence of a \V-enriched left adjoint would be guaranteed if we knew that
  $U^{\sperp_{\mathsf{M}}}$ creates cotensor products.
  The $\V$-functor $U^{\pitchfork_{\mathsf{M}}}\colon
  \mathcal{J}^{\pitchfork_{\mathsf{M}}}\to \C^\two$ automatically creates
  cotensors with $\two$. This because it is the pullback of a monadic
  $\V$-functor along a \V-functor $\hat U$ that preserves any limit that exists
  in $\C^\two$ --~see \S \ref{sec:cofibrant-generation}.
  Recall that $U^{\sperp_{\mathsf{M}}}$ is the composition of the
  equaliser
  $\mathbb{J}^{\sperp_{\mathsf{M}}}\to\mathcal{J}^{\pitchfork_{\mathsf{M}}}$
  with
  $U^{\pitchfork_{\mathsf{M}}}\colon\mathcal{J}^{\pitchfork_{\mathsf{M}}}\to\C^\two$. Therefore,
  it will suffice to prove that $\mathbb{J}^{\sperp_{\mathsf{M}}}$ is closed
  under cotensors with $\two$ in
  $\mathcal{J}^{\pitchfork_{\mathsf{M}}}$. The pair of $\V$-functors
  $\mathcal{J}^{\pitchfork_{\mathsf{M}}}\rightrightarrows
  (\mathcal{J}\times_{\mathcal{J}_0}\mathcal{J})^{\pitchfork_{\mathsf{M}}}$
  whose coequaliser is $\mathbb{J}^{\sperp_{\mathsf{M}}}$ commute with the
  respective $\V$-functors into $\C^\two$, which create cotensors with
  $\two$. It follows that $\mathbb{J}^{\sperp_{\mathsf{M}}}$ is closed
  under these, concluding the proof.
\end{proof}

When $\mathsf{M}$ is the free \textsc{rali} \V-monad, we obtain:
\begin{cor}
  \label{cor:6}
  {\normalfont\textsl{\textsc{awfs}}}s on $\C$ cofibrantly \slkz-generated by
  small internal categories in \VCat\ exist provided that $\C$ has cotensor
  products with $\two$ and its underlying category is locally presentable.
\end{cor}

In \S \ref{sec:lax-orthogonality-kz} we shall show that
cofibrantly $(\mathsf{E},\mathsf{M})$-generated \textsc{awfs}s are \textsc{lofs}s
whenever $(\mathsf{E},\mathsf{M})$ is a \textsc{lofs}. This applies to the
corollary above.
We next give an example in which $\V=\Cat$ and $\mathsf{M}$-algebras are
cloven opfibrations --~see Example~\ref{ex:7}.

\begin{ex}
  \label{ex:10}
  Let $(\mathsf{E},\mathsf{M})$ denote the \textsc{lofs} on \Cat\ whose
$\mathsf{M}$-algebras are cloven Grothendieck opfibrations. A pair of morphisms
$f\colon A\to B$ and $g\colon C\to D$ in a 2-category \C\ satisfies
$f\pitchfork_{\mathsf{M}}g$ when the comparison functor
$\C(B,C)\to\C(A,C)\times_{\C(A,D)}\C(B,D)$ is an opfibration. This can be
unpacked in the following way. Given a diagram
\begin{equation}
  \label{eq:2}
  \diagram
  A\ar[d]_f\ar[r]^h\rtwocell<\omit>{^<1.5>\alpha}&
  C\ar[d]^g\\
  B\ar[ur]|d\ar[r]_k\rtwocell<\omit>{<-1.5>\beta}&
  D
  \enddiagram
\end{equation}
with $\beta\cdot f=g\cdot \alpha$, there exists a pre-opcartesian lifting, which is
a 2-cell $\gamma\colon d\Rightarrow \bar d$ such that $\gamma\cdot f=\alpha$ and
$g\cdot\gamma=\beta$. Its pre-opcartesian property means that any other
$\gamma'\colon d\Rightarrow d'$ with the same property as $\gamma$ factors
uniquely through $\gamma$. Furthermore, pre-opcartesian morphisms should be closed
under composition.

Therefore, any small set of functors $\{f_i\}$ cofibrantly
$(\mathsf{E},\mathsf{M})$-generates an \textsc{awfs} (in fact, a \textsc{lofs},
as we shall see in \S \ref{sec:lax-orthogonality-kz})
$(\mathsf{L},\mathsf{R})$ on \Cat. The $\mathsf{R}$-algebras are
those functors $g\colon C\to D$ with the extra structure described in the
previous paragraph, for each $f_i$.

Let us look at the example of the family with one element $0\colon
\mathbf{1}\to\two$, the functor that picks out $0\in\two=(0\to 1)$. This functor
plays the role of $f$ above. A functor $h\colon \mathbf{1}\to C$ is an object
$c\in C$, $k$ and $d$ are morphisms in $D$ and $C$, respectively. The
transformations $\alpha$ and $\beta$ are a morphism $\dom(d)\to c$ and a
commutative square as depicted on the right below, respectively.
\begin{equation}
  \label{eq:2}
  \diagram
  \bullet \ar[r]^d\ar[d]_\alpha&\bullet\ar@{..>}[d]\\
  \bullet\ar@{..>}[r]&\circ
  \enddiagram
  \quad
  \xrightarrow{\phantom{M}g\phantom{M}}
  \quad
  \diagram
  \bullet\ar[d]_{g(\alpha)}\ar[r]^{g(d)}&\bullet\ar[d]^{\beta_1}\\
  \bullet\ar[r]^-{k}&\bullet
  \enddiagram
\end{equation}
The universal property of $d$ asserts that the solid cospan on the left can be
completed to a commutative square that is mapped by $g$ to the square on the
right.
Furthermore, this dotted completion is universal,
translating the pre-opcartesian property. This means that any other completion of the
solid cospan to a commutative square that is mapped by $g$ to the square on the
right factors uniquely by the universal square. The fact that pre-opcartesian
morphisms are closed under composition is, in this case, automatic, as it means
that pushout squares are closed under pasting.

We may specialise to the case of an $\mathsf{R}$-algebra of the form
$g\colon C\to \mathbf{1}$, in other words, to fibrant objects. In this case all we have is a choice of a pushout for
each cospan in $C$. In other words, the fibrant replacement monad of
$(\mathsf{L},\mathsf{R})$ is the monad on \Cat\ whose algebras are categories
with chosen pullbacks. A general $\mathsf{R}$-algebra can be regarded as a
functor that has pushouts fibrewise.


\end{ex}

\section{Lax orthogonality of KZ-cofibrantly generated {AWFS}s}
\label{sec:lax-orthogonality-kz}

When an \textsc{awfs} $(\mathsf{L},\mathsf{R})$ is cofibrantly \kz-generated by
$U\colon \mathcal{J}\to\C^\two$, each $\mathsf{R}$-algebra is \kz-orthogonal to
each $Uj$. What is by no means obvious is that each $\mathsf{R}$-algebra is
\kz-orthogonal to all the $\mathsf{L}$-coalgebras. Ie,
the \textsc{awfs} is lax orthogonal, or a \textsc{lofs}. The proof of this
fact is the subject of the present section.

\begin{theorem}
  \label{thm:1}
  Assume that $\V$ is equipped with a {\normalfont\textsl{\textsc{lofs}}}
  $(\mathsf{E},\mathsf{M})$ and $\mathcal{C}$ is a complete and cocomplete \V-category.
  Any
  {\normalfont\textsl{\textsc{awfs}}} on $\mathcal{C}$ that is cofibrantly $(\mathsf{E},\mathsf{M})$-generated by a small \V-category is lax
  orthogonal. In particular, any {\normalfont\textsl{\textsc{awfs}}} that is
  cofibrantly $\slkz$-generated by a small $\V$-category is lax orthogonal.
\end{theorem}
\begin{proof}
  Suppose that the \V-enriched \textsc{awfs} $(\mathsf{L},\mathsf{R})$ on \C\ is
  cofibrantly $(\mathsf{E},\mathsf{M})$-generated by a \V-category $\mathcal{J}$ over
  $\C^\two$. Then we have that the forgetful \V-functor
  $\mathsf{R}\text-\mathrm{Alg}\to\C^\two$ is the pullback of
  $\mathsf{M}\text-\mathrm{Alg}\to[\mathcal{J}^{\mathrm{op}},\V]^\two$ along
  $W_U\colon\mathcal{C}^\two\to[\mathcal{J}^{\mathrm{op}},\V]^\two$. Corollary~\ref{cor:9} will ensure that $\mathsf{R}$ is lax
  idempotent once we have shown that the right Kan extensions $\Ran_{W_U}$
  exist. The latter condition is obvious since $W_U$ has a left adjoint --~see
  Lemma~\ref{l:14}.
\end{proof}
  The particular case of \kz-generation is obtained by setting
  $(\mathsf{E},\mathsf{M})$ the \textsc{lofs} whose $\mathsf{M}$-algebras are
  \textsc{rali}s.

\begin{theorem}
  \label{thm:2}
  Assume that $\V$ is equipped with a {\normalfont\textsl{\textsc{awfs}}}
  $(\mathsf{E},\mathsf{M})$.
  Suppose that \C\ is a complete \V-category with the property that any small \V-category
  over $\C^\two$ cofibrantly $(\mathsf{E},\mathsf{M})$-generates a {\normalfont \textsl{\textsc{lofs}}}. Then, any
  {\normalfont\textsl{\textsc{awfs}}} cofibrantly $(\mathsf{E},\mathsf{M})$-generated by a small internal category in \VCat\
  is lax orthogonal. In particular, this applies to cofibrant \slkz-generation.
\end{theorem}
\begin{proof}
  Suppose that $\mathbb{J}$ cofibrantly \kz-generates the \textsc{awfs}
  $(\mathsf{L},\mathsf{R})$.
  By considering the equaliser~\eqref{eq:5}, we can exhibit
  $\mathbb{J}^{\sperp_{\mathsf{M}}}$ as the equaliser of a pair of \V-functors,
  as displayed in the left below, where $\mathsf{T}$ and $\mathsf{S}$ are the
  monad part of the
  lax orthogonal \textsc{awfs}s generated by
  $\mathcal{J}$ and
  $\mathcal{J}\times_{\mathcal{J}_0}\mathcal{J}$,
  respectively. In fact, in what follows we only need to regard $\mathsf{R}$,
  $\mathsf{T}$ and $\mathsf{S}$ as ordinary monads on the ordinary category
  $\C^\two_\uc$.
  Since any functor between categories of algebras that
  commutes with the respective forgetful functors is induced by a unique
  morphism of monads,
  there is a commutative diagram in the category
  $\mathbf{Mnd}(\C^\two_\uc)$ as displayed on the right hand side.
  \begin{equation}
    \label{eq:8}
    \diagram
    \mathsf{R}\text-\mathrm{Alg}\ar[r]
    \cong
    \mathbb{J}^{\sperp_{\mathsf{M}}}
    &
    \mathsf{T}\text-\mathrm{Alg}\ar@<4pt>[r]\ar@<-4pt>[r]
    &
    \mathsf{S}\text-\mathrm{Alg}
    \enddiagram
    \qquad
    \diagram
    \mathsf{S}_\uc\ar@<4pt>[r]^-\tau\ar@<-4pt>[r]_-\nu&
    \mathsf{T}_\uc\ar[r]^\zeta&
    \mathsf{R}_\uc
    \enddiagram
  \end{equation}
  By definition, this diagram induces the equaliser diagram at the level of
  corresponding categories of algebras over $\C^\two_\uc$, depicted on the left,
  so it
  exhibits the ordinary monad $\mathsf{R}_\uc$ as the algebraic
  coequaliser of $\tau$ and $\nu$, and by~\cite[Prop.~26.2]{MR589937}, as the
  coequaliser of this pair of morphisms in the category $\mathbf{Mnd}(\C^\two_\uc)$
  of monads on $\C^\two_\uc$.

  Lax idempotent 2-monads on a 2-category $\mathcal{A}$ are characterised by
  being co-orthogonal to certain 2-monad morphisms $\sigma_f$, for each morphism
  $f$ in $\mathcal{A}$, as shown in~\cite{Kelly:Prop-like} and recalled in
  \S \ref{sec:refl-lax-idemp}.
  Thus, we have to prove that $\mathsf{R}_\uc$ is co-orthogonal to these morphisms
  $\sigma_{(h,k)}$, for $(h,k)$ a morphism in $\C^\two$. As
  the full subcategory of objects co-orthogonal
  to each member of a family of morphisms is always closed under colimits,
  $\mathsf{R}_\uc$ is co-orthogonal to each $\sigma_{(h,k)}$, completing the
  proof.
\end{proof}

Under certain conditions, the hypotheses of Theorem~\ref{thm:2} are always
satisfied, as for example:
\begin{cor}
  \label{cor:2}
  Suppose that $(\mathsf{E},\mathsf{M})$ is an accessible
  \V-enriched {\normalfont\textsl{\textsc{lofs}}} on the locally presentable $\V$, and that
  the underlying category of the complete and cocomplete \V-category \C\ is
  locally presentable. Then, any small internal category $\mathbb{J}$ in \VCat\
  over $\Sq(\C)$ cofibrantly $(\mathsf{E},\mathsf{M})$-generates a \V-enriched
  {\normalfont\textsl{\textsc{awfs}}} which, moreover, is lax orthogonal.  In
  particular, this applies to \slkz-generation.
\end{cor}
The proof of the corollary consists of a simple application of
Proposition~\ref{prop:1} and Theorem~\ref{thm:2}.

\begin{cor}
  \label{prop:8}
  The {\normalfont\textsl{\textsc{awfs}}}s whose right morphisms are,
  respectively, opfibrations, normal opfibrations, and split opfibrations are
  lax orthogonal.
\end{cor}
\begin{proof}
  The three \textsc{awfs}s are of the form $\mathcal{J}^{\pitchfork_{\mathkz}}$ for a
  small category $\mathcal{J}$ or
  $\mathbb{J}^{\sperp_{\mathkz}}$ for a small double category $\mathbb{J}$, as
  seen in Examples~\ref{ex:7}, \ref{ex:6} and~\ref{ex:8}.
\end{proof}

\section{Representable multicategories}
\label{sec:repr-cat-enrich}

As an application of the theory developed thus far, in this section we exhibit
opfibrations of multicategories as the right part of a \textsc{lofs} on the
2-category of multicategories that is cofibrantly \kz-generated by a double
category of easy description. Opfibrations of multicategories were considered
(and called covariant fibrations) in \cite{MR2075589} as generalisations of the
representable multicategories, in the sense that the latter are opfibrations
over the terminal multicategory.

\subsection{Background on multicategories}
\label{sec:backgr-mult}

We assume the reader is acquainted with the notion of multicategory, or
non-$\Sigma$ coloured operad. The idea goes back to
J.~Lambek~\cite{Lambek:DeductiveII} and modern expositions can be found in
\cite{Hermida:RepresMulticats} and \cite[I.2]{Leinster:HOHC}.

We shall represent multicategories by blackboard bold letters, so $\mathbb{A}$
will be a multicategory. The 2-category of multicategories will be \multicat. A \emph{module}
$\phi\colon \mathbb{A}\nrightarrow \mathbb{B}$ consists of sets
$\phi(b_1,\dots,b_n;a)$ with actions of $\mathbb{A}$ and $\mathbb{B}$ on each
side. See, for example, \cite[I.2.3]{Leinster:HOHC}. The \emph{collage} for
$\phi$ is the multicategory
$\operatorname{coll}(\phi)$ with objects the disjoint union of those of
$\mathbb{A}$ and $\mathbb{B}$, which contains both multicategories as full
sub-multicategories, and with
$\operatorname{coll}(\phi)(b_1,\dots,b_n,a)=\phi(b_1,\dots,b_n;a)$. The
remainder multihoms are empty. The
composition of $\operatorname{coll}(\phi)$ uses the module structure of
$\phi$. We shall denote by
$j_{\mathbb{B}}\colon \mathbb{B}\to \operatorname{coll}(\phi)$ the full
inclusion. Collages enjoy a universal property that we shall need only in the
following special case.

The $n$th
cardinal $\{0<1<\dots<n-1\}$ we shall denote simply by $n$, and regard it
both as a set and as a discrete multicategory. Consider the module
$\phi_n\colon 1\nrightarrow n$ given by $\phi_n(0,1,\dots,n-1;0)=1$ and all the
other possibilities equal to $\emptyset$. Then
$C(n)\coloneq\operatorname{coll}(\phi_n)$ has objects $0,\dots,n-1$ and an extra object
that we denote by $*$. Aside from the multihoms that correspond to the identity
maps, there is a single non-empty multihom
$C(n)(0,\dots,n-1;*)=1$. There clearly is a bijection between morphisms of
multicategories $C(n)\to\mathbb{B}$ and multimorphisms of the form
$b_0,\dots,b_{n-1}\to b$ in $\mathbb{B}$.

\subsection{Opfibrations of multicategories}
\label{sec:opfibr-mult}

Let us now turn to opfibrations of multicategories, called covariant fibrations
of multicategories in~\cite{MR2075589}. Given a multicategory morphism $P\colon
\mathbb{E}\to\mathbb{B}$ and a multimap $f\colon P(e_1),\dots, P(e_n)\to b$ in
$\mathbb{B}$, an opcartesian lifting of $f$ is a multimap $\bar f\colon
e_1,\dots, e_n\to e$ in $\mathbb{E}$ with the property that: for any $g\colon
e_1,\dots,e_n\to x$ and $w\colon b\to P(x)$ such that $P(g)=w\cdot f$,
there exists a unique $v\colon e\to x$ satisfying $v\cdot f=g$.
One says that $P$ is an \emph{opfibration of
multicategories} if each multimap has an opcartesian lifting. This definition
is completely analogous to that of an opfibration of categories, only with the
appropriate modifications to allow for multimaps. The usual argument shows that the
definition we use is equivalent to that of~\cite{MR2075589}.

A \emph{cloven} opfibration of multicategories is an opfibration with extra
\emph{structure,} a cleavage, that provides a choice of opcartesian lifting for
each multimap $f$ as in the previous paragraph. It is easy to verify that to give a
cleavage for $P$ is the same as giving, for each $n$, a \textsc{rali} structure
to the functor
\begin{equation}
  \multicat(C(n),\mathbb{E})\to
  \mathcal{E}^n\times_{\mathbb{B}^n}\multicat(C(n),\mathbb{B})
  \label{eq:6}
\end{equation}
(where $\mathcal{E}$ is the underlying category of $\mathbb{E}$) whose first
coordinate is induced by pre-composing with $n\hookrightarrow C(n)$, while the
second coordinate is induced by post-composing with $P$.

If $P$ and $P'$ are two cloven opfibrations of multicategories, a strict
morphism $P\to P'$ is a morphism of $\multicat^\two$
\begin{equation}
  \label{eq:164}
  \xymatrixrowsep{.3cm}
  \diagram
  \mathbb{E}\ar[d]_P\ar[r]^-{H}&
  \mathbb{E}'\ar[d]^{P'}\\
  \mathbb{B}\ar[r]^-K&\mathbb{B}'
  \enddiagram
\end{equation}
where $H$ preserves the chosen opcartesian liftings. This means that if $f$ is
an opcartesian multimap in $\mathbb{E}$, then $H(f)$ is the chosen opcartesian
lifting of $K(f)$. In this way we obtain a category $\clopfib$ of cloven
opfibrations of multicategories, together with a forgetful functor
\begin{equation}
  \clopfib\longrightarrow \multicat^\two.\label{eq:165}
\end{equation}
We make \clopfib\ into a 2-category, and \eqref{eq:165} into a 2-functor, by
declaring that~\eqref{eq:165} is locally full and faithful.

Write $\arepmult$ for the fibre of $\clopfib$ \eqref{eq:165} over
$\mathbf{1}$. Its objects we call \emph{algebraic representable
  multicategories}.
A slight modification of the arguments of~\cite{Hermida:RepresMulticats} shows
that an algebraic representable multicategory structure on $\mathbb{A}$ amounts
to an unbiased monoidal category structure on the underlying category of
$\mathbb{A}$ --~see \cite[I.3.1]{Leinster:HOHC}. There
is an isomorphism between $\arepmult$ and the 2-category
$\mathbf{UMonCat}_s$ of unbiased monoidal categories and strict monoidal
functors.

The last piece of background we shall need is the fact that $\multicat$ is
locally finitely presentable, as an ordinary category. This can be shown
directly, by hand, as it were. It will be more convenient to appeal to \cite[D.1]{Leinster:HOHC},
specifically Theorem~6.5.4 and Proposition~6.5.6. These deal with generalised
multicategories, whose type, or shape, is defined by a certain monad $T$ on a
category $\mathcal{E}$. In our case, these are the free monoid monad on
$\mathbf{Set}$. The said results tell us that the forgetful functor from
$\multicat$ to multigraphs is monadic with a finitary associated monad. The
category of multigraphs is locally finitely presentable, being the slice of
$\mathbf{Set}$ over
\begin{equation}
  \label{eq:166}
  \mathbf{Set}\xrightarrow{\Delta}
  \mathbf{Set}\times\mathbf{Set}
  \xrightarrow{T\times 1}
  \mathbf{Set}\times\mathbf{Set}
  \xrightarrow{\times}\mathbf{Set}
\end{equation}
where $T$ is the free monoid monad. We deduce that $\multicat$ is itself locally
finitely presentable.

\subsection{Cofibrant \kz-generation}
\label{sec:cofibrant-generation-2}

Let $\mathcal{F}$ be the set of finite cardinals (ie, the set of natural
numbers) and $U\colon \mathcal{F}\to \multicat^\two$ the functor given by
$U(n)=j_n\colon n\hookrightarrow C(n)$.
An object of $\mathcal{F}^{\pitchfork_{\mathkz}}$ consists of a morphism of
multicategories $P\colon \mathbb{E}\to\mathbb{B}$ with a \textsc{rali} structure
on the comparison functor
\begin{equation}
  \multicat(C(n),\mathbb{E})\to
  \multicat(n,\mathbb{E})\times_{\multicat(n,\mathbb{B})}\multicat(C(n),\mathbb{B})
  \label{eq:7}
\end{equation}
for each $n$. This is the same functor as \eqref{eq:6}, so we see that
$\mathcal{F}^{\pitchfork_{\mathkz}}$ has cloven opfibrations as objects. One
easily verifies that its morphisms are morphisms of opfibrations that preserve
cleavages.
\begin{prop}
  \label{prop:3}
  \begin{enumerate*}
  \item There is an isomorphism
    $\mathbf{OpFib}\cong\mathcal{F}^{\pitchfork_{\mathkz}}$.
  \item There is an isomorphism between the fibre of
    $\mathcal{F}^{\pitchfork_{\mathkz}}$ over $\mathbf{1}$ and the 2-category of
    unbiased monoidal categories and strict monoidal functors.
  \end{enumerate*}
\end{prop}

An application of Corollary~\ref{cor:6} and Theorem~\ref{thm:1} yields:
\begin{prop}
  \label{prop:11}
  The category $\mathcal{F}$ over $\multicat^\two$ cofibrantly \slkz-generates a
  {\normalfont\textsl{\textsc{lofs}}} $(\mathsf{L},\mathsf{R})$ on
  \multicat. The $\mathsf{R}$-algebras are the cloven opfibrations of
  multicategories.
\end{prop}

Taking the fibre of $\mathsf{R}\text-\mathrm{Alg}$ over the terminal
multicategory, we easily obtain the following consequence.
\begin{cor}
  \label{cor:1}
  The 2-functor from $\mathbf{UMonCat}$ to $\multicat$ is strictly monadic. The
  associated monad is lax idempotent.
\end{cor}
The corollary can be regarded as a version of
the results in \cite{Hermida:RepresMulticats}. There are a couple of
differences, though. First, the
techniques in \emph{ibid} are suitable for application to generalised
multicategories, as done in \cite{Leinster:HOHC} and \cite{MR2075589}. Secondly,
\cite{Hermida:RepresMulticats} compares strict monoidal categories with
multicategories, while our corollary involves unbiased monoidal categories.

\begin{rmk}
  \label{rmk:1}
  It is not hard to cofibrantly \kz-generate two \textsc{lofs}s on
  \multicat\ whose fibrant objects are normal unbiased monoidal categories,
  and strict monoidal categories, respectively. In order to do this, one
  should add some generating squares to $\mathcal{F}$, and employ the resulting
  double category.
\end{rmk}


\section{Cofibrant generation and accessibility}
\label{sec:cons-cofibr-gener}

If $\mathbb{J}$ is a small double category over $\Sq(\C)$ and $\C$ is locally
presentable, \cite{MR3393453} showed that
$\mathbb{J}^{\sperp}\to\mathcal{C}^\two$ creates $\kappa$-filtered colimits, for
some $\kappa$. In this section, we look at the case when $\C$ is not necessarily
locally presentable but, following \cite{MR589937}, it is equipped with an
\textsc{ofs}. Our main example will be the category of topological spaces.
The section's results will be used later on to give examples of \textsc{lofs}s
that are not cofibrantly (\kz-)generated.

  Let $\mathcal{M}$ be a subcategory of a category $\mathcal{A}$. By an
  \emph{$\mathcal{M}$-diagram} in
  $\mathcal{A}$ we will mean a functor $\mathcal{D}\to\mathcal{A}$ that factors
  through $\mathcal{M}$ and has small domain. This $\mathcal{M}$-diagram will be
  said to be $\kappa$-filtered, for a regular cardinal $\kappa$, if
  $\mathcal{D}$ is a $\kappa$-filtered category.

  \begin{df}
    Let $\mathcal{A}$ be a cocomplete category with a subcategory
    $\mathcal{M}$. A functor with domain $\mathcal{A}$ is
    \emph{$\mathcal{M}$-accessible} if it preserves $\kappa$-filtered colimits
    of $\mathcal{M}$-diagrams, for some regular cardinal $\kappa$.
  \end{df}
An object $A$ of
$\mathcal{A}$ has $\mathcal{M}$-\emph{rank} less or equal to a regular cardinal
$\kappa$ if the representable functor $\mathcal{A}(A,-)$ is $\mathcal{M}$-accessible.
The $\mathcal{M}$-rank of $A$ is the smallest
regular cardinal $\kappa$ for which this happens.
When there is no risk of confusion, we may omit $\mathcal{M}$ and simply say
rank.
The notion of rank of an object has been attributed by~\cite{MR0322004} to M.~Barr.
We say that $\mathcal{A}$ is \emph{locally
    $\mathcal{M}$-ranked} if each of its objects has a
  rank.
The subcategory $\mathcal{M}$ will most often
be the right part of an \textsc{ofs}
$(\mathcal{E},\mathcal{M})$.
Categories locally bounded with respect to an
\textsc{ofs} $(\mathcal{E},\mathcal{M})$ --~in the sense  of~\cite[\S 6.1]{Kelly:BCECT}~-- are locally
ranked for $\mathcal{M}$, due to the argument given in~\cite[\S
3.2]{MR0322004}. A number of
examples are given in~\cite{MR0322004} and \cite[\S 6.1]{Kelly:BCECT}; later we
will be interested in the example of the category of $T_0$ topological spaces
equipped with the \textsc{ofs} $\mathcal{E}$ = surjections and
$\mathcal{M}$ = subspace inclusions.

Each subcategory $\mathcal{M}$ of $\A$ induces
a subcategory of $\A^\two$ component-wise, ie the subcategory consisting of
those morphisms in $\A^\two$ with both domain and codomain components in
$\mathcal{M}$. We will still denote this subcategory by $\mathcal{M}$ when no
confusion is likely.
It is easy to show that a morphism $f\colon X\to Y$ has rank less or equal to $\kappa$ as an object of
$\A^\two$ if $X$ and $Y$ have rank less or equal to $\kappa$.

For an object $X$ of a \V-category \C\ and a subcategory
$\mathcal{M}\subseteq\C_\uc$, one has two related notions. First, one can say
that $\C(X,-)\colon\C\to\V$ preserves \V-enriched colimits of $\kappa$-filtered
$\mathcal{M}$-diagrams. This is the same as asserting that the ordinary functor
$\C(X,-)\colon\C_\uc\to\V$ preserves these colimits.
Secondly, one can say that $X$ has $\mathcal{M}$-rank
less or equal to $\kappa$, or what is the same, that
$\C_\uc(X,-)\colon\C_\uc\to\mathbf{Set}$ preserves $\kappa$-filtered colimits of
$\mathcal{M}$-diagrams. The former implies the latter but not the other way
around, not in general. Before stating a standard lemma looking at the
relationship between these conditions, we recall a piece of
terminology: an object $X\in\V$ is finitely presentable when
$\V(X,-)\colon\V\to\mathbf{Set}$ preserves filtered colimits, ie when it has
rank $\aleph_0$.

\begin{lemma}
  \label{l:18}
  Assume, in addition to our blanket hypotheses on \V, that the inclusion $\V\hookrightarrow\Cat$ is finitary.
  Let $\C$ be a cocomplete $\V$-category with a subcategory
  $\mathcal{M}\subseteq\C_\uc$. Denote by $(\C_\uc)_\kappa\subset\C_\uc$ the
  full subcategory of objects of $\mathcal{M}$-rank less or equal to $\kappa$.
  For $X\in\C$, the functor $\C(X,-)\colon\C_\uc\to\V$ preserves
  $\kappa$-filtered colimits of $\mathcal{M}$-diagrams if and only if $\two*X\in(\C_\uc)_\kappa$.
\end{lemma}
\begin{proof}
  The first observation is that $J\colon \V\hookrightarrow\Cat$ is
  finitary if and only if $\two$ is a finitely presentable object of
  \V. Indeed, the arrow category $\two$ is a strong generator in $\Cat$, so
  $\Cat(\two,-)\colon\Cat\to\mathbf{Set}$ preserves and reflects filtered
  colimits. Then, $J$ is finitary if and only if its composition with
  $\Cat(\two,-)$, which is isomorphic to
  $\V(\two,-)\colon\V\to\mathbf{Set}$, is so.

  The functor $\C(\two*X,-)\colon\C_\uc\to\mathbf{Set}$ is isomorphic to the
  composition of the representables $\C(X,-)\colon \C_\uc\to\V$ and
  $\V(\two,-)\colon\V\to\mathbf{Set}$. The latter preserves and reflects
  filtered colimits. It follows that $\C(\two*X,-)$ and $\C(X,-)$ preserve the
  same filtered colimits.
\end{proof}

\begin{Ass}
  \label{hypothesis:1}
  In addition to our blanket hypotheses on \V\ (it is closed under limits and
  exponentials in \Cat, it is cocomplete and $\two\in \V$) we shall, for the
  rest of this section,
  assume that it is closed under filtered colimits in \Cat. As seen in the
  previous lemma's proof, this new condition is equivalent to requiring that
  $\two$ should be a finitely presentable object of \V. Furthermore, it makes
  $\V$ a finitely locally presentable category. Our main examples will be
  $\V=\Cat$ and $\V=\Ord$ the category of posets.
\end{Ass}

\begin{lemma}
  \label{l:19}
  Given a morphism $j\colon X\to Y$ in \C, the functor $\C^\two(j,-)\colon\C^\two_\uc\to\V$
  preserves $\kappa$-filtered colimits of $\mathcal{M}$-diagrams if and only if
  $\two*X$ and $\two*Y$ lie in $(\C_\uc)_\kappa$.
\end{lemma}
\begin{proof}
  The functor $\C^\two(j,-)$ can be constructed as the pullback of the diagram
  \begin{equation}
    \diagram
    \C(X,\dom-)\ar[r]&\C(X,\cod-)&\C(Y,\cod-)\ar[l]
    \enddiagram
  \end{equation}
  so it will preserve any filtered colimit that is preserved by $\C(X,-)$ and
  $\C(Y,-)$. The result now
  follows from Lemma~\ref{l:18}.
\end{proof}

\begin{df}
  A $\V$-enriched \textsc{awfs} $(\mathsf{E},\mathsf{M})$ on a cocomplete
  $\V$-category is \emph{$\mathcal{M}$-accessible} if one of the following
  equivalent conditions holds: the comonad $\mathsf{E}$ is
  $\mathcal{M}$-accessible; the monad $\mathsf{M}$ is $\mathcal{M}$-accessible;
  the \V-functor $K=\dom M=\cod E \colon\C^\two\to\C$ is
  $\mathcal{M}$-accessible. When $\mathcal{M}$ is the whole category, one says
  that the \textsc{awfs} is accessible.
\end{df}.

\begin{prop}
  \label{prop:2}
  Suppose \V\ is equipped with an accessible {\normalfont\textsl{\textsc{awfs}}}
  $(\mathsf{E},\mathsf{M})$ and that it satisfies the hypotheses in
  Assumption~\ref{hypothesis:1}. Suppose that
  $\C$ is a cocomplete \V-category whose underlying category $\C_\uc$ is locally
  ranked with respect to the subcategory $\mathcal{M}$.
  If $\mathbb{J}$ is a small internal category in \VCat\
    over $\Sq(\C)$, then $\mathbb{J}^{\sperp_{\mathsf{M}}}\to\C^\two$ creates
  $\kappa$-filtered colimits of $\mathcal{M}$-diagrams, for some regular
  cardinal $\kappa$.
\end{prop}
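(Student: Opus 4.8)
The plan is to peel off the two constructions that define $\mathbb{J}^{\sperp_{\mathsf{M}}}\to\C^\two$ --~the pullback~\eqref{eq:2} and the equaliser~\eqref{eq:6}~-- and to check that each stage creates the colimits at issue, with the rank hypothesis entering at exactly one point. First I would fix once and for all a regular cardinal $\kappa$ bounding the ranks of all the objects $\dom Uj$ and $\cod Uj$ as $j$ ranges over the small $\V$-category $\mathcal{J}$; such a $\kappa$ exists because $\mathcal{J}$ is small and every object of $\C$ has a rank. Since $\C$, hence $\C^\two$, is cocomplete with colimits computed pointwise, every $\kappa$-filtered diagram of $\mathcal{M}$-subobjects in $\C^\two$ has a colimit, and the task is to lift this colimit uniquely, and as a colimit, to $\mathbb{J}^{\sperp_{\mathsf{M}}}$.

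For the pullback step I would consider the pullback~\eqref{eq:2} defining $\mathcal{J}^{\pitchfork_{\mathsf{M}}}$. The right-hand forgetful $\V$-functor $\mathsf{M}\text-\mathrm{Alg}\to[\mathcal{J}^{\mathrm{op}},\V]^\two$ creates all colimits, because $\mathsf{M}$ is obtained by pointwise application of the cocontinuous monad $\mathsf{M}$ on $\V^\two$ and is therefore itself cocontinuous. The claim is then that $W_U$ preserves $\kappa$-filtered colimits of $\mathcal{M}$-subobjects. In component $j$, the morphism $W_U(f)$ is $\C(\cod Uj,\dom f)\to\C^\two(Uj,f)$, and $\C^\two(Uj,f)$ is a pullback of the hom-objects $\C(\dom Uj,\dom f)$, $\C(\dom Uj,\cod f)$ and $\C(\cod Uj,\cod f)$. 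Along a $\kappa$-filtered diagram of $\mathcal{M}$-subobjects the domain and codomain diagrams are $\kappa$-filtered diagrams in $\mathcal{M}$, so each representable $\C(\dom Uj,-)$ and $\C(\cod Uj,-)$ commutes with the colimit by the rank bound; and since $\kappa$-filtered colimits commute with finite limits in $\V\subseteq\Cat$, so does the pullback defining $\C^\two(Uj,-)$. Thus $W_U$ preserves the colimit, computed pointwise in $[\mathcal{J}^{\mathrm{op}},\V]^\two$, and the standard fact that creation of colimits pulls back along a functor preserving them yields that $\mathcal{J}^{\pitchfork_{\mathsf{M}}}\to\C^\two$ creates $\kappa$-filtered colimits of $\mathcal{M}$-subobjects. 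The same argument applies verbatim to $(\mathcal{J}\times_{\mathcal{J}_0}\mathcal{J})^{\pitchfork_{\mathsf{M}}}$, whose structure functor sends a composable pair to a composite of two generators, so its (co)domains are again among the $\dom Uj,\cod Uj$ and are bounded by the same $\kappa$.

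For the equaliser step I would recall that $\mathbb{J}^{\sperp_{\mathsf{M}}}$ is the equaliser of the two parallel $\V$-functors $\mathcal{J}^{\pitchfork_{\mathsf{M}}}\rightrightarrows(\mathcal{J}\times_{\mathcal{J}_0}\mathcal{J})^{\pitchfork_{\mathsf{M}}}$ of~\eqref{eq:6}, that the inclusion is full and faithful, and that everything lies over $\C^\two$. Given a $\kappa$-filtered diagram $D$ of $\mathcal{M}$-subobjects in $\mathbb{J}^{\sperp_{\mathsf{M}}}$, the pullback step creates its colimit in $\mathcal{J}^{\pitchfork_{\mathsf{M}}}$ over the colimit in $\C^\two$. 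Both parallel functors commute with the forgetful functors to $\C^\two$, and since $(\mathcal{J}\times_{\mathcal{J}_0}\mathcal{J})^{\pitchfork_{\mathsf{M}}}\to\C^\two$ likewise creates these colimits, each parallel functor sends the created colimit cocone to a cocone lying over the colimit in $\C^\two$, which by uniqueness of created lifts must again be the colimit; hence both parallel functors preserve it. As $D$ factors through the equaliser its two images coincide, so the two images of $\operatorname{colim}D$ coincide, placing $\operatorname{colim}D$ in $\mathbb{J}^{\sperp_{\mathsf{M}}}$; full-faithfulness of the inclusion makes it a colimit there, and uniqueness of the lift is inherited from the pullback step.

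The crux is the preservation claim for $W_U$ in the pullback step: this is the only place where the proper \textsc{ofs} and the rank hypothesis are really used, via the identification of $\C^\two(Uj,-)$ as a finite limit of representables together with the rank bound on the generators and the commutation of $\kappa$-filtered colimits with finite limits in $\V$. The remaining ingredients --~creation of colimits for a cocontinuous monad, and the stability of colimit-creation under pullback along colimit-preserving functors and under the full-and-faithful equaliser over $\C^\two$~-- are formal.
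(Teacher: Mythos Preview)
Your proposal is correct and follows essentially the same two-step strategy as the paper: first show $\mathcal{J}^{\pitchfork_{\mathsf{M}}}\to\C^\two$ creates the colimits via the pullback~\eqref{eq:2}, then pass through the equaliser~\eqref{eq:6} using that the parallel functors commute with the forgetful functors into $\C^\two$ and that the inclusion $\mathbb{J}^{\sperp_{\mathsf{M}}}\rightarrowtail\mathcal{J}^{\pitchfork_{\mathsf{M}}}$ is full and faithful. The only cosmetic difference is in the verification that $W_U$ preserves the relevant colimits: the paper factors $W_U=\tilde U^\two\cdot\C^t$ and argues via the rank of each $Uj\in\C^\two$, whereas you unwind $\C^\two(Uj,-)$ as a finite limit of representables $\C(\dom Uj,-)$, $\C(\cod Uj,-)$ and invoke commutation of $\kappa$-filtered colimits with finite limits in $\V$; both arguments amount to the same thing.
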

\begin{proof}
  We first prove the case of cofibrant generation by a small
  category $\mathcal{J}$ over $\C^\two$.
  By definition --~see diagram~\eqref{eq:131}~--
  $\mathcal{J}^{\pitchfork_{\mathsf{M}}}\to\C^\two$ is the pullback of the
  forgetful $V\colon
  \mathsf{M}\text-\mathrm{Alg}\to[\mathcal{J}^{\mathrm{op}},\V]^\two$ along
  $W_U$.
  The  enriched monad $\mathsf{M}$ is cocontinuous, so $V$
  creates all colimits and, by Lemma~\ref{l:20}, it only remains to verify that $W_U$ preserves
  $\kappa$-filtered colimits of $\mathcal{M}$-diagrams, for some $\kappa$.

  Since $\mathcal{J}$ is small, there is a regular cardinal $\kappa$ such that
  each $Uj\in\C^\two$ has rank less or equal to $\kappa$, ie $\C^\two(Uj,-)$ preserves
  $\kappa$-filtered colimits of $\mathcal{M}$-diagrams. This means that $\tilde
  U\colon\C^\two\to[\mathcal{J}^{\mathrm{op}},\V]$, given by $f\mapsto\C^\two(U-,f)$, preserves
  $\kappa$-filtered colimits of $\mathcal{M}$-diagrams. Furthermore, the functor
  $W_U$ sends $f$ to the (dashed) comparison morphism depicted
  in~\eqref{eq:13}, from where it is clear that $W_U$ preserves
  $\kappa$-filtered colimits of $\mathcal{M}$-diagrams.

  Now we prove the case of cofibrant generation by an internal category
  $\mathbb{J}=(\mathcal{J}\rightrightarrows\mathcal{J}_0)$
  in \VCat\ and $U=(U_1,U_0)\colon\mathbb{J}\to\Sq(\C)$ an internal
  functor. Denote by $V$ the functor
  $\mathcal{J}\times_{\mathcal{J}_0}\mathcal{J}\to\C^\two$ from the object of
  composable pairs. By definition, $\mathbb{J}^{\sperp_{\mathsf{M}}}$ is an
  equaliser \eqref{eq:5} in $\VCat/\C^\two$ of a pair of \V-functors of the form
  $\mathcal{I}_1^{\pitchfork_{\mathsf{M}}}\rightrightarrows
  \mathcal{I}_2^{\pitchfork_\mathsf{M}}$. Each of the two respective \V-functors
  into $\C^\two$ create $\kappa$-filtered colimits of $\mathcal{M}$-diagrams,
  for some $\kappa$. We choose the largest of these two cardinals, so both \V-functors
  create $\kappa$-filtered colimits of $\mathcal{M}$-diagrams. It follows that
  the parallel \V-functors strictly preserve the created colimits and that
  $\mathbb{J}^{\pitchfork_{\mathsf{M}}}$ is closed under these in
  $\mathcal{I}_1^{\pitchfork_{\mathsf{M}}}$, completing the proof.
\end{proof}
\begin{cor}
  \label{cor:3}
  Suppose that $\V$ and $\C$ are as in Proposition~\ref{prop:2}. Then, any
  cofibrantly $(\mathsf{E},\mathsf{M})$-generated
  {\normalfont\textsl{\textsc{awfs}}} on \C\ is $\mathcal{M}$-accessible.
\end{cor}

The reader would remember that the notation $\mathcal{J}^\pitchfork$, without mention of the
\V-monad $\mathsf{M}$, means that we are taking as $\mathsf{M}$ the \V-monad whose
algebras are split epimorphisms. The objects of $\mathcal{J}^\pitchfork$ were
described in Example~\ref{rmk:3}.
\begin{lemma}
  \label{l:6}
  Let $(\mathsf{L},\mathsf{R})$ be an {\normalfont\textsl{\textsc{awfs}}} on the \V-category \C, with
  underlying {\normalfont\textsl{\textsc{wfs}}} $(\mathcal{L},\mathcal{R})$, and
  $\mathcal{I}\subset\mathcal{L}$ a small set of morphisms. There exists a
  \V-functor over $\C^\two$
  \begin{equation}
    \mathsf{R}\text-\mathrm{Alg}\longrightarrow\mathcal{I}^\pitchfork
    .
  \end{equation}
\end{lemma}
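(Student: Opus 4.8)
The plan is to realise the desired \V-functor by equipping each $\mathsf{R}$-algebra with the canonical diagonal fillers of Section~\ref{sec:funct-fact}. Regard $\mathcal{I}$ as a (discrete) \V-category over $\C^\two$ via the inclusion $\mathcal{I}\hookrightarrow\C^\two$, so that, by the description of objects of $\mathcal{J}^\pitchfork$ following Notation~\ref{not:1}, an object of $\mathcal{I}^\pitchfork$ lying over a morphism $g\colon A\to B$ is exactly a family $\varphi=(\varphi_i)_{i\in\mathcal{I}}$ of sections in \V\ of the maps $d\mapsto(d\cdot i,\,g\cdot d)$ sending a diagonal to the square it fills. The formula \eqref{eq:9} produces precisely such sections, provided we have a coalgebra structure for the copointed endo-\V-functor $(L,\Phi)$ on each $i$ and the algebra structure for the pointed endo-\V-functor on $g$ supplied by the $\mathsf{R}$-algebra.

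Extracting the coalgebra structures is the only step requiring an idea, and it is where I expect the (modest) content to sit. For $i\in\mathcal{I}\subseteq\mathcal{L}$, the free $\mathsf{R}$-algebra $\rho_i=Ri$ lies in $\mathcal{R}$, so $i$ has the left lifting property against $\rho_i$; applying this to the square $(\lambda_i,1_{\cod i})\colon i\to\rho_i$ yields a morphism $s_i\colon\cod i\to Ki$ with $s_i\cdot i=\lambda_i$ and $\rho_i\cdot s_i=1$, that is, an $(L,\Phi)$-coalgebra structure on $i$. I would fix one such $s_i$ for each $i$; since $\mathcal{I}$ is small this choice is harmless, and crucially it is made independently of any $\mathsf{R}$-algebra, which will keep the forthcoming assignment functorial in $(g,p)$.

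It then remains to assemble these data into a \V-functor over $\C^\two$. On an $\mathsf{R}$-algebra $(g,p)$ I would set $\varphi_i(u,v)=p\cdot K(u,v)\cdot s_i$, with the evident action $\varphi_i(\alpha,\beta)=p\cdot K(\alpha,\beta)\cdot s_i$ on $2$-cells. That each $\varphi_i$ is a morphism of \V\ and a section of $d\mapsto(d\cdot i,\,g\cdot d)$ is the content of \eqref{eq:9} together with its $2$-cell refinement: using $s_i\cdot i=\lambda_i$, the \V-naturality identity $K(u,v)\cdot\lambda_i=\lambda_g\cdot u$ and $p\cdot\lambda_g=1$ one gets $\varphi_i(u,v)\cdot i=u$, while $\rho_i\cdot s_i=1$, $\rho_g\cdot K(u,v)=v\cdot\rho_i$ and $g\cdot p=\rho_g$ give $g\cdot\varphi_i(u,v)=v$. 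On a morphism of $\mathsf{R}$-algebras $(h,k)\colon(g,p)\to(g',p')$ the algebra-morphism identity $h\cdot p=p'\cdot K(h,k)$ yields
\begin{equation*}
  h\cdot\varphi_i(u,v)=p'\cdot K(h,k)\cdot K(u,v)\cdot s_i=\varphi'_i(h\cdot u,\,k\cdot v),
\end{equation*}
which is exactly the morphism condition for $\mathcal{I}^\pitchfork$; the $2$-cell case is identical. Since the underlying morphism is left unchanged, the resulting \V-functor $\mathsf{R}\text-\mathrm{Alg}\to\mathcal{I}^\pitchfork$ lives over $\C^\two$. All of the verifications past the extraction of the $s_i$ are routine \V-naturality bookkeeping; the genuine point is the passage from $i\in\mathcal{L}$ to an actual coalgebra structure $s_i$, which is what makes the fillers canonical (and hence functorial) rather than merely existent.
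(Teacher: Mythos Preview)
Your proposal is correct and follows essentially the same approach as the paper: choose an $(L,\Phi)$-coalgebra structure $s_i$ on each $i\in\mathcal{I}$ (possible since $\mathcal{I}\subset\mathcal{L}$ and small), then send an $\mathsf{R}$-algebra $(g,p)$ to the lifting operation $\varphi_i(u,v)=p\cdot K(u,v)\cdot s_i$ given by the canonical diagonal~\eqref{eq:9}. The paper phrases the same construction via the identification $\mathsf{R}\text-\mathrm{Alg}\cong\mathsf{L}\text-\mathrm{Coalg}^\pitchfork$, writing $\psi_i(h,k)=\varphi_{L(i)}(h,k\cdot\rho_i)\cdot s_i$, but unwinding that formula yields yours; your explicit verifications of the section and morphism conditions are just what the paper leaves implicit.
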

\begin{proof}
  The class $\mathcal{L}$ consists of those morphisms of $\C$ that admit at
  least one coalgebra structure for the copointed endo-\V-functor
  $(L,\Phi)$. Since $\mathcal{I}$ is small, we can choose a coalgebra structure for each
  $i\in\mathcal{I}$, or equivalently, a \V-functor over
  $\C^\two$
  \begin{equation}
    \label{eq:130}
    \mathcal{I}\longrightarrow(\mathsf{L},\Phi)\text-\mathrm{Coalg}
  \end{equation}
  with domain the discrete \V-category associated to the set $\mathcal{I}$.
  We can now consider the composition of functors
  \begin{equation}
    \label{eq:129}
    \mathsf{R}\text-\mathrm{Alg} \hookrightarrow{}
    (R,\Lambda)\text-\mathrm{Alg} \longrightarrow
    (L,\Phi)\text-\mathrm{Coalg}^{\pitchfork} \longrightarrow
    \mathcal{I}^\pitchfork
  \end{equation}
  where the first one is an inclusion, the second is the \V-functor
  of~\eqref{eq:93} --~in fact, \eqref{eq:93}~describes an ordinary functor that
  can easily be reproduced in the \V-enriched context~-- and the last one is the
  \V-functor induced by~\eqref{eq:130}.
\end{proof}
  The construction of the above lemma
  is valid only because $\mathcal{I}$ is a set instead of a category,
  in which case we would not be able to guarantee the compatibility between the
  coalgebra structures chosen for each $i\in\mathcal{I}$ and the morphisms of
  $\mathcal{I}$.

Given an \textsc{awfs}
$(\mathsf{L},\mathsf{R})$ on a \V-category \C, we denote by $\mathsf{R}_1$ its \emph{fibrant replacement monad}, ie the
restriction of $\mathsf{R}$ to $\C/1\cong\C$.

We close the section with a proposition that shall be used in \S
\ref{sec:continuous-lattices-1} to exhibit a new example of a \textsc{lofs} that
is not cofibrantly generated.

\begin{prop}
  \label{prop:4}
  Suppose that \V\ satisfies Assumption~\ref{hypothesis:1} and that $\C$ is a \V-category satisfying:
  \begin{enumerate*}[label=(\alph*)]
  \item Its underlying category $\C_\uc$ is cocomplete and is locally ranked
    with respect to the subcategory $\mathcal{M}$.
  \item It has a terminal object.
  \item It is equipped with a \V-enriched {\normalfont\textsl{\textsc{awfs}}}
    $(\mathsf{L},\mathsf{R})$ whose underlying
    {\normalfont\textsl{\textsc{wfs}}} $(\mathcal{L},\mathcal{R})$ is
    cofibrantly generated by a set of morphisms with $\mathcal{M}$-rank.
  \end{enumerate*}
  Then, there is a regular cardinal $\kappa$ such
  that any $\kappa$-filtered colimit $\operatorname{colim}X_j$ of an $\mathcal{M}$-diagram
  is a fibrant object for $(\mathcal{L},\mathcal{R})$, provided that ${X_j}$ is
  a diagram in $\mathsf{R}_1\text-\mathrm{Alg}$.
\end{prop}
\begin{proof}
  Suppose that $\mathcal{I}\subset\mathcal{L}$ cofibrantly generates
  $(\mathcal{L},\mathcal{R})$. 
  Consider a \V-functor $\mathsf{R}\text-\mathrm{Alg}\to\mathcal{I}^\pitchfork$
  over $\C^\two$ as provided by Lemma~\ref{l:6}. Taking the fibre over the terminal
  object $1\in\C$, we obtain the first arrow displayed below.
  \begin{equation}
    \label{eq:32}
    \mathsf{R}_1\text-\mathrm{Alg}\longrightarrow
    (\mathcal{I}^\pitchfork)_1\longrightarrow \mathbf{Fib}
  \end{equation}
  The second arrow is the inclusion into the full subcategory of fibrant
  objects, which exists since any object with a lifting operation against
  $\mathcal{I}$ is certainly weakly orthogonal to $\mathcal{I}$.
  Proposition~\ref{prop:2} guarantees that the forgetful
  $\mathcal{I}^\pitchfork\to\C^\two$ creates $\kappa$-filtered colimits of
  $\mathcal{M}$-diagrams for some regular cardinal $\kappa$. If
  $\{X_j\}$ is a $\kappa$-filtered diagram in $\mathsf{R}_1\text-\mathrm{Alg}$ whose
  morphisms also lie in $\mathcal{M}$, and if $\operatorname{colim}X_j$ exists
  in \C, we can send
  $\{X_j\}$ along~\eqref{eq:32} to a diagram in $(\mathcal{I}^\pitchfork)_1$,
  and deduce that $\colim X_j$ is created by
  $(\mathcal{I}^\pitchfork)_1\to\C$. In particular, $\colim X_j$ supports a structure of an object of
  $(\mathcal{I}^\pitchfork)_1$, thus it is a fibrant object for
  $(\mathcal{L},\mathcal{R})$.
\end{proof}

\section{Order-enriched \textsc{awfs}s}
\label{sec:cofibr-kz-gener}

We now turn our attention to the case of order-enriched \textsc{awfs}s and
their cofibrant \kz-generation. The case of a locally presentable base category
was dealt with in \S \ref{sec:exist-locally-pres}, so we now
concentrate in the non-locally presentable case, as to include important examples
as that of the category of topological spaces. The section ends with a short
comparison with \cite{MR3283679}.

We will denote the category of
posets by \Pord. By poset we mean a set equipped with a partial ordering that is
antisymmetric, or equivalently, a small category that has at most one morphism
between any two objects and whose isomorphisms are identity morphisms.
\begin{Ass}
  \label{ass:1}
  Let us assume that our category $\V\subseteq\Cat$ is
  the cartesian closed category $\Pord$ of posets. We equip it with an accessible
  \textsc{lofs} $(\mathsf{E},\mathsf{M})$.
\end{Ass}

\begin{lemma}
  \label{l:1}
  Let $\mathsf{T}=(T,\eta,\mu)$ be a lax idempotent monad on an \Ord-category
  $\mathcal{A}$. The forgetful \Ord-functor
  $\mathsf{T}\text-\mathrm{Alg}\to (T,\eta)\text-\mathrm{Alg}$ is
  an isomorphism.
\end{lemma}
This lemma is implicit in, for example, \cite[Cor.~4.2.3]{MR1641443}. To give a
short proof, recall that the identity $1_{TA}$ is a left extension of $\eta_A$
along $\eta_A$, since $T$ is lax idempotent. If $a\colon TA\to A$ satisfies
$a\cdot \eta_A=1_A$, it will be a $T$-algebra structure as soon as $a\dashv
\eta_A$. All that is left to show is that $1_{TA}\leq \eta_A\cdot a$, which is
equivalent to $\eta_A\leq \eta_A\cdot a\cdot \eta_A$, which holds true.

\begin{lemma}
  \label{l:2}
  If $\mathbb{J}=(\mathcal{J}\rightrightarrows\mathcal{J}_0)$ is an internal
  category in $\Pord\text-\Cat$ over $\Sq(\C)$, then the inclusion
  $\mathbb{J}^{\sperp_{\mathsf{M}}}\hookrightarrow\mathcal{J}^{\pitchfork_{\mathsf{M}}}$ is an identity.
\end{lemma}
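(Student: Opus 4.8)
The plan is to combine the explicit description of objects given just before Lemma~\ref{l:8} with the uniqueness clause of the discrete pullback-fibration of Section~\ref{sec:discr-pullb-fibr}. Since the inclusion $\mathbb{J}^{\sperp_{\mathkz}}\hookrightarrow\mathcal{J}^{\pitchfork_{\mathkz}}$ is full and faithful and lies over $\C^\two$ (as recorded after~\eqref{eq:6}), it is the identity as soon as it is bijective on objects; that is, it suffices to prove that \emph{every} object $(g,\varphi)$ of $\mathcal{J}^{\pitchfork_{\mathkz}}$ automatically satisfies the coherence condition~\eqref{eq:17} cutting out $\mathbb{J}^{\sperp_{\mathkz}}$ as the equaliser~\eqref{eq:6}. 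First I would unwind that condition: for a composable pair $(j,i)$ it equates two $\mathsf{M}$-algebra structures on one and the same underlying morphism, namely the structure $\varphi_{j\bullet i}$ carried by the given object and the structure $F(\varphi)$ manufactured out of $\varphi_i$ and $\varphi_j$ through the pullback~\eqref{eq:7} (equivalently the right-hand square of~\eqref{eq:5}) followed by composition.

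The crucial leverage is that, by Section~\ref{sec:discr-pullb-fibr}, the pullback-induced structure $F(\varphi)$ is the \emph{unique} $\mathsf{M}$-algebra structure for which~\eqref{eq:7} becomes a morphism of $\mathsf{M}$-algebras. Consequently~\eqref{eq:17} is equivalent to the single assertion that the given $\varphi_{j\bullet i}$ also turns~\eqref{eq:7} into a morphism of $\mathsf{M}$-algebras. I would then reduce this assertion using Assumption~\ref{ass:1}: in the \kz-setting an $\mathsf{M}$-algebra structure is a \textsc{rali} structure (Remark~\ref{rmk:2}), i.e.\ a section together with a counit $\varepsilon$, and because \V\ is preorder-enriched there is at most one $2$-cell between parallel morphisms. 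Hence the counit and all of its compatibility axioms are automatic, so that being a morphism of \textsc{rali}s reduces to the plain commutativity of the two chosen sections against the maps of~\eqref{eq:7}.

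The heart of the argument --~and the step I expect to be the main obstacle~-- is thus the strict commutativity of the sections along the pullback~\eqref{eq:7}. I would test it by composing with the two projections out of the pullback and using that a \kz-lifting operation records a left adjoint with \emph{identity} unit: the ensuing triangle identities are strict equations, not merely invertible $2$-cells, and since~\eqref{eq:7} is a pullback these projected equalities should detect equality of the sections on the nose, while the counit data contributes only $2$-cells that are unique and hence automatically compatible under Assumption~\ref{ass:1}. (One can also see $\varphi_{j\bullet i}$ and $F(\varphi)$ as left-adjoint fillers of the same square, the counit of one and the composite of the \kz-lifting operations --~the ``restricted'' operations after~\eqref{eq:23}~-- furnishing mutually inverse comparison $2$-cells; the real work is to upgrade this canonical isomorphism to a strict identity.) Granting the strict section-commutativity, $\varphi_{j\bullet i}$ makes~\eqref{eq:7} a morphism of \textsc{rali}s, so by the uniqueness clause it equals $F(\varphi)$; this is exactly~\eqref{eq:17}, every object lies in $\mathbb{J}^{\sperp_{\mathkz}}$, and the inclusion is the identity. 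The point demanding the most care is precisely this reduction to the strict triangle identities, verifying that no genuine non-invertible comparison survives --~which is where the preorder hypothesis, guaranteeing uniqueness of $2$-cells, is indispensable.
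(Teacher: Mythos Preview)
Your route is considerably more elaborate than the paper's, and the elaboration neither avoids nor resolves the difficulty you flag at the end. The paper's argument is essentially one line: the equaliser~\eqref{eq:6} singles out those $(g,\varphi)\in\mathcal{J}^{\pitchfork_{\mathkz}}$ for which two \textsc{rali} structures on the same morphism $\C^\two(U(j\bullet i),(1,g))$ agree; in a locally preordered $2$-category any morphism carries at most one \textsc{rali} structure (the left adjoint is determined, and the counit is the unique $2$-cell between its source and target), so the equaliser is all of $\mathcal{J}^{\pitchfork_{\mathkz}}$. There is no need to unwind~\eqref{eq:17}, invoke the pullback-fibration, or test commutativity by projections.

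There is, moreover, a concrete mismatch in your reduction step. You invoke the uniqueness clause of Section~\ref{sec:discr-pullb-fibr} to characterise one of the two structures as ``the unique $\mathsf{M}$-algebra structure making~\eqref{eq:7} a morphism of $\mathsf{M}$-algebras'', and then propose to check that $\varphi_{j\bullet i}$ has the same property. But that uniqueness clause concerns the structure on the \emph{left vertical arrow} of~\eqref{eq:7}, whose domain is $\tilde U(g)(j)=\C^\two(Uj,g)$, whereas $\varphi_{j\bullet i}$ is a structure on the composite $\tilde U(1,g)(j\bullet i)$, whose domain is $\tilde U(1)(j)=\C(\cod Uj,\dom g)$; these are different morphisms, and the second parallel functor in~\eqref{eq:6} first pulls back along~\eqref{eq:7} and only then composes with~\eqref{eq:25}. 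So $\varphi_{j\bullet i}$ is not a candidate for the pullback characterisation, and your proposed equivalence with~\eqref{eq:17} does not hold as stated. Even granting a corrected reduction, the residual ``strict section-commutativity'' you isolate is precisely the statement that left-adjoint sections are unique; your detour therefore returns you to the paper's one-line observation rather than bypassing it.
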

\begin{proof}
  By definition, the inclusion is fully faithful and injective on objects. It
  remains to show that is surjective on objects. $\mathsf{M}$-algebra
  structures are unique --~see \S \ref{sec:lax-idempotent-2}. Inspecting the definition of fibre square in \S
  \ref{sec:cofibrant-generation}, we have that fibre square structures on
  objects of $[\mathcal{J}^{\mathrm{op}},\Ord]^{\two\times\two}$ are unique. By
  definition, the objects of $\mathbb{J}^{\sperp_{\mathsf{M}}}$ are the objects
  of $\mathcal{J}^{\pitchfork_{\mathsf{M}}}$ such that a certain pair of fibre
  square structures on the square~\eqref{eq:4} coincide. This holds, by
  uniqueness, so
  $\mathbb{J}^{\sperp_{\mathsf{M}}}=\mathcal{J}^{\pitchfork_{\mathsf{M}}}$.
\end{proof}

An \textsc{ofs} $(\mathcal{E},\mathcal{M})$ on a cocomplete category
$\mathcal{A}$ is \emph{cocomplete} if wide pushouts of morphisms in
$\mathcal{E}$ exist \cite{MR589937}. In other words, for each object $A\in\mathcal{A}$, the
category $A\downarrow\mathcal{E}$ has arbitrary coproducts. We are now ready to
prove the section's main result.

\begin{theorem}
  \label{thm:3}
  Let $\C$ be a cocomplete and finitely cocomplete \Pord-enriched category whose underlying
  category satisfies:
  \begin{enumerate*}[label=(\alph*)]
  \item It has a cocomplete {\normalfont\textsl{\textsc{ofs}}}
    $(\mathcal{E},\mathcal{M})$.
  \item It is locally $\mathcal{M}$-ranked.
  \end{enumerate*}
  Then, the
  cofibrantly $(\mathsf{E},\mathsf{M})$-generated
  {\normalfont\textsl{\textsc{awfs}}} on any internal category in $\Pord\text-\Cat$ over $\Sq(\C)$ exists and
  is lax orthogonal.
\end{theorem}
\begin{proof}
  It suffices to consider the case of a small \Pord-category
  $U\colon\mathcal{J}\to\C^\two$, by Lemma~\ref{l:2}.
  By definition, $\mathcal{J}^{\pitchfork_{\mathsf{M}}}\to\C^\two$ is the pullback
  of $\mathsf{M}\text-\mathrm{Alg}\to[\mathcal{J}^{\mathrm{op}},\Pord]^{\two}$
  along $W_U\colon\mathcal{C}^\two\to [\mathcal{J}^{\mathrm{op}},\Pord]^{\two}$,
  or
  equivalently, by Lemma~\ref{l:1}, the pullback of
  $(M,\Lambda^M)\text-\mathrm{Alg}\to[\mathcal{J}^{\mathrm{op}},\Pord]^\two$. Since
  $W_U$ has a left adjoint $G\dashv W_U$, the \Pord-category
  $\mathcal{J}^{\pitchfork_{\mathsf{M}}}$ is isomorphic to $(R,\Lambda^R)\text-\mathrm{Alg}$
  for the pointed \Pord-functor $(R,\Lambda^R)$ on $\C^\two$ given by the pushout
  depicted on the right.
  \begin{equation}
    \label{eq:10}
    \diagram
    \mathcal{J}^{\pitchfork_{\mathsf{M}}}\ar[r]\ar[d]&
    (M,\Lambda^M)\text-\mathrm{Alg}\ar[d]\\
    \C^\two\ar[r]^-{W_U}&
    [\mathcal{J}^{\mathrm{op}},\Pord]^\two
    \enddiagram
    \qquad
    \diagram
    GW_U\ar[d]_{G\Lambda^M W_U}\ar[r]^-{\mathrm{counit}}&1\ar[d]^{\Lambda^R}\\
    GMW_U\ar[r]&R
    \enddiagram
  \end{equation}

  To conclude the proof, it suffices to show that the functor
  \begin{equation}
    (R,\Lambda^R)\text-\mathrm{Alg}
    \longrightarrow
    \C^\two
    \label{eq:27}
  \end{equation}
  is monadic. In fact, it suffices to show that its underlying ordinary functor
  has a left adjoint, by Lemma~\ref{l:17}.
  To do so, we can show that $W_U$ is $\mathcal{M}$-accessible, in the
  same way as we did in the proof of Proposition~\ref{prop:2}.
  We then observe that $GW_U$ and $GMW_U$ are $\mathcal{M}$-accessible too, $G$
  is cocontinuous and $M$ is accessible. Therefore, $R$ is
  $\mathcal{M}$-accessible. We may now use~\cite[14.3 \& 15.6]{MR589937} to
  deduce that
  the ordinary functor \eqref{eq:27} has a left adjoint.
\end{proof}
\begin{rmk}
  \label{rmk:5}
Theorem~\ref{thm:3} applies to the case of the \textsc{lofs}
$(\mathsf{E},\mathsf{M})$ for which $\mathsf{M}$ is the free \textsc{rali} monad
on $\V^\two$ of \S \ref{sec:intern-categ-laris}. In this case it guarantees that
cofibrantly \kz-generated \textsc{lofs}s exist on any \Ord-category
whose underlying ordinary category is locally ranked with respect to a
cocomplete \textsc{ofs}.
\end{rmk}
\begin{rmk}
  \label{rmk:4}
  The version of Theorem~\ref{thm:3} mentioned in the previous remark is related
  to \cite{MR3283679}. One of its main results, \cite[Thm~6.10]{MR3283679},
  states that, if $\C$ is locally ranked and $\mathcal{J}$ is a family of
  morphisms of \C, the full subcategory of $\mathcal{C}$ defined by the objects
  that are \kz-injective to each morphism of $\mathcal{J}$ is
  reflexive. Furthermore, the induced monad is lax-idempotent. The main
  component of the proof is the construction of the Kan-injective reflection
  chain in~\cite[\S 5]{MR3283679}. When $\mathcal{C}$ has a terminal object,
  this can be translated to our notation by saying that the the fibre of the
  codomain functor $\mathcal{J}^{\pitchfork_{\mathkz}}\to\C^\two\to\C$ over
  $1\in\C$ is monadic over $\C$, with lax-idempotent induced monad.

  A point where \cite{MR3283679} is more general than the present paper is that
  the class of morphism $\mathcal{J}$ need not be small, but only those which
  are not order-epimorphisms should form a small set.

  On the other hand, our approach is more general in a couple of ways. The only
  part of this section that is specific to $\V=\Ord$ is the existence part of
  Theorem~\ref{thm:3}. The lax orthogonality of the resulting \textsc{awfs} is
  valid in more general cases, for example $\V=\Cat$ (see \S
  \ref{sec:lax-orthogonality-kz}). Another advantage of our approach is that it
  accommodates an $\Ord$-category $\mathcal{J}$ over $\C^\two$, instead of just
  a family of morphisms of \C. Finally, even though our requirements on $\C$ and
  the \textsc{ofs} $(\mathcal{E},\mathcal{M})$ differ slightly from those
  of~\cite{MR3283679}, this difference seems to be superfluous in the the
  examples of interest.
\end{rmk}

\section{Continuous lattices}
\label{sec:continuous-lattices-1}
The papers \cite{1702.02602} and \cite{lmcs3960} constructed an \textsc{lofs} on
the category of $T_0$ topological spaces by means of the ``method of the simple
monad'' introduced in~\cite{Clementino2016458}. In this section we shall show
that this \textsc{lofs}, which is closely related to complete lattices, is not
cofibrantly \kz-generated, nor cofibrantly generated, nor is its underlying
\textsc{wfs} cofibrantly generated.
We begin with a brief summary of continuous lattices.

Let \Topo\ be the category of $T_0$ topological spaces and continuous maps. Each
$T_0$ space $X$ carries a posetal structure defined by $x\sqsubseteq y$ if and
only if each neighbourhood of $x$ is also a neighbourhood of $y$; equivalently,
if $x\in\overline{\{y\}}$. This is sometimes called the specialisation order. A
continuous function $f\colon X\to Y$ becomes an order-preserving function
$f\colon(X,\sqsubseteq)\to(Y,\sqsubseteq)$, so we have a functor
\( \Topo\to\Ord\) into the category of posets.
The cartesian closed category $\Ord$ can play the role of the base
of enrichment \V\ of the previous sections, and we can make $\Topo$ into a $\Ord$-category
by declaring $f\leq g$ if the associated morphisms of posets satisfy
$f\leq g$. 
In
elementary terms, $f\leq g$ if and only if $f(x)\sqsubseteq g(x)$ for all
$x$.

We now recall the definition of continuous lattices~\cite{MR0404073}. Suppose
that $(L,\leq)$ is a complete poset. Given a pair of elements $x$, $y\in
L$ we say that $x$ is \emph{way below}
$y$, written $x\ll y$, if for all directed subsets $D\subseteq L$, if $y\leq \vee D$ then
there exists some $d\in D$ with $x\leq d$. A \emph{continuous
lattice} is a complete poset where every element is the supremum of the
elements way below it:
$
  x=\bigvee\{y:y\ll x\}.
$
Equip $L$ with the Scott topology $\tau_{L}$, whose open sets are those subsets $U\subseteq L$
that satisfy:
\begin{enumerate*}
\item \label{item:17} if $x\in U$ and $x\leq y$, then $y\in U$;
\item \label{item:18} if $D\subseteq L$ is a directed subset and $\bigvee D\in
  U$, then $D\cap U\neq\emptyset$.
\end{enumerate*}
In this way we can regard any continuous lattice as a $T_0$ topological
space, and the specialisation order for this topology coincides with the
order of $L$. Conversely, if the poset $(X,\sqsubseteq)$ of a $T_0$ space
$X$ is a continuous lattice, the topology $\tau_{(X,\sqsubseteq)}$ coincides
with the original topology of $X$. In this way, continuous lattices can be
identified with a class of $T_0$ topological spaces.

A function $f\colon L\to L'$ between continuous lattices is continuous for the
associate topology if and only if it preserves directed suprema. Thus, the
category of continuous lattices and maps of posets that preserve directed suprema
is isomorphic to a full subcategory $\mathbf{CL}$ of $\Topo$.

As part of his seminal work~\cite{MR0404073},
D.~Scott showed that a topological space is a continuous lattice if
and only if it is injective with respect to all embeddings of topological
spaces. Later, it was shown in~\cite{MR0367013} that the category of continuous lattices is
isomorphic to the category of algebras of the filter monad $\mathsf{F}$ on
$\Topo$, the monad that assigns to each space $X$ its space of filters $FX$
endowed with a suitable topology.

Before explaining how all this relates to \textsc{lofs}, let us make a point
about the direction of the inequalities between continuous maps. The filter
monad on $\Topo$ is colax idempotent, with the ordering between continuous maps
induced by the specialisation order. When we want to apply the theory of
\textsc{lofs} and simple monads to this example, we are presented with two
options: we either reverse the ordering on 2-cells (inequalities) in our statements, or else
we reverse the specialisation ordering to make the filter monad lax
idempotent. We choose the latter approach, used in~\cite{MR2927175,1702.02602},
so $f\leq g$ for a pair of parallel continuous maps shall mean
$f(x)\sqsupseteq g(x)$ for all $x$ in their domain.

We are now ready to explain how to construct an \textsc{lofs}, as done
in~\cite{1702.02602,lmcs3960}. The filter monad $\mathsf{F}$ is lax
idempotent --~with the convention on the ordering of continuous maps of the
previous paragraph. Furthermore, $\mathsf{F}$ is \emph{simple} in the terminology of
\cite{Clementino2016458}, thus inducing a \textsc{lofs} $(\mathsf{L},\mathsf{R})$ on
\Topo\ such that:
\begin{enumerate*}
\item \label{item:21} the fibrant replacement $\Ord$-enriched monad on
  $\Topo$ is the filter monad $\mathsf{F}$;
\item \label{item:22} a continuous function $f$ is an $\mathsf{L}$-coalgebra if
  and only if $f$ is a topological embedding.
\end{enumerate*}
A detailed construction can be found in~\cite{1702.02602}.
The underlying \textsc{wfs} of this \textsc{lofs} was considered in~\cite{MR2927175}.
\begin{theorem}
  \label{thm:11}
  The {\textsl{\textsc{lofs}}} on \Topo\ described is {not}
  cofibrantly $(\mathsf{E},\mathsf{M})$-generated, for any cocontinuous
  $\Ord$-enriched {\textsl{\textsc{awfs}}s} $(\mathsf{E},\mathsf{M})$ on
  \Ord. In particular it is not \slkz-generated nor cofibrantly
  generated. Furthermore, its underlying
  {\normalfont\textsl{\textsc{wfs}}} is not
  cofibrantly generated.
\end{theorem}
\begin{proof}
  We first tackle the part of the statement that deals with the \textsc{awfs}.
  The proof is an application of Proposition~\ref{prop:2}. The category \C\ will
  be that of $T_0$ topological spaces, $\Topo$, regarded as an $\Ord$-category
  via the specialisation order, and $\mathcal{M}$ will be the the subcategory of
  $\Topo$ defined by continuous maps that are injective and
  homeomorphisms onto their image. It is well-known
  that each topological space has an $\mathcal{M}$-rank.

  Suppose that the \textsc{awfs} $(\mathsf{L},\mathsf{R})$ described at
  the beginning of the section is cofibrantly generated with respect to a
  cocontinuous \textsc{awfs} $(\mathsf{E},\mathsf{M})$ on \Ord.
  By Proposition~\ref{prop:2}, the
  forgetful functor $\mathsf{R}\text-\mathrm{Alg}\to\C^\two$ creates
  $\kappa$-filtered colimits of $\mathcal{M}$-diagrams, for some regular cardinal
  $\kappa$. In particular, if $\mathsf{R}_1$ is the associated fibrant
  replacement monad, ie the restriction of $\mathsf{R}$ to $\Topo/1\cong\Topo$,
  then the forgetful functor $\mathbf{CL}\cong \mathsf{R}_1\text-\mathrm{Alg}\to\Topo$ creates
  the said colimits. This is a contradiction, as exhibited by the following
  example, which, furthermore, together with Proposition~\ref{prop:4} shows that
  the underlying \textsc{wfs} of $(\mathsf{L},\mathsf{R})$ cannot be cofibrantly
  generated.
\end{proof}

\begin{ex}
  \label{ex:5}
  In the next few paragraphs we show that, for each regular cardinal $\beta$,
  there is a $\beta$-filtered colimit of continuous lattices that is \emph{not} created
  by the forgetful functor $\mathbf{CL}\to\Topo$. As usual, $\alpha<\nu$ will
  mean $\alpha\in \nu$ for ordinals $\alpha$, $\nu$.


  The way-below relation $\alpha\ll\nu$ on an ordinal $\beta$ satisfies: 
  if $\alpha\in\nu\in\beta$, then $\alpha\ll\nu$. For, if
  $D\subset\beta$ verifies $\nu\leq \sup D$, then $\alpha\in\sup
    D$ and there must be a $\delta\in D$ with $\alpha\in\delta$.

  It is now easy to verify that successor ordinals are continuous lattices, as
  they are complete and any $\alpha$ is the supremum of the ordinals
  $\gamma\in\alpha$. If $\mu\in\nu$, the inclusion
  $\mathrm{succ}(\mu)\subset\mathrm{succ}(\nu)$ is continuous for the Scott topology, since it
  preserves suprema.

  Now suppose that $\beta$ is a limit ordinal.
  Then $\beta$ is a filtered union of
  $\mathrm{succ}(\mu)\subseteq\beta$, for $\mu\in\beta$. The cocone
  $\mathrm{succ}(\mu)\hookrightarrow\beta$ exhibits $\beta$ as a colimit of the
  $\mathrm{succ}(\mu)$, and we can make this a colimit in $\Topo$ by equipping
  $\beta$ with the colimit topology induced by the Scott topology of the continuous
  lattices $\mathrm{succ}(\mu)$: a subset $U\subseteq\beta$ is
  open if $U\cap\mathrm{succ}(\mu)$ is open in $\mathrm{succ}(\mu)$, for all
  $\mu\in\beta$. More explicitly, $U\subseteq\beta$
  is open if:
  \begin{enumerate*}
  \item it is up-closed, and;
  \item for any bounded $D\subseteq\beta$, $\sup D\in U$ implies
    $U\cap D\neq\emptyset$.
  \end{enumerate*}
With this topology,
  $\mathrm{succ}(\mu)\hookrightarrow\beta$ is an embedding of spaces.

  Now assume further that $\beta$ is a regular cardinal, so $(\beta,\leq)$ is a
  $\beta$-filtered ordered set. By the above paragraph, $\beta$ is a
  $\beta$-filtered colimit of continuous
  lattices. But $\beta$ is not a continuous lattice, as it is not complete: it
  lacks a top element.
\end{ex}

\appendix

\section{Lax idempotent 2-monads}
\label{sec:lax-idempotent-2}

A 2-monad $\mathsf{T}=(T,i,m)$ is \emph{lax idempotent} if one of the
following equivalent conditions holds:
\begin{enumerate*}
\item
  $Ti\dashv m$ with identity unit;
\item $m\dashv iT$ with identity counit;
\item there
  exists a modification $\delta\colon Ti\Rrightarrow iT$ such that
  $m\cdot\delta=1$ and $\delta\cdot i=1$;
\item the forgetful 2-functor from the
  2-category of $\mathsf{T}$-algebras and lax morphisms is full and faithful on
  morphisms.
\end{enumerate*}
Some of these conditions appear in~\cite{Kock:KZmonads} and~\cite{MR0424896} in
the context of doctrines. See also~\cite{Marmolejo:Doctrines}. A full list of equivalent conditions with the
respective proofs can be found in~\cite{Kelly:Prop-like}.

A 2-comonad $\mathsf{G}$ on $\mathcal K$ is lax idempotent when the
corresponding monad $\mathsf{G}^{\mathrm{op}}$ on the 2-category
$\mathcal{K}^{\mathrm{op}}$, obtained by reversing the morphisms, is lax
idempotent.

\subsection{Algebras and morphisms as monad morphisms}
\label{sec:algebr-morph-as}
There is a standard way of regarding algebras and algebra morphisms as monad
maps, introduced in~\cite{MR0364394}, that we proceed to describe.

Assume that $\C$ is a locally small complete \V-category;
strictly speaking, all we need is finite limits and cotensor products
For each
pair of objects $A$ and $B$ denote by $\langle A,B\rangle$ the right Kan
extension of the \V-functor $A\colon \mathbf{1}\to\C$ along $B\colon \mathbf{1}\to\C$.
\begin{equation}
  \label{eq:48}
  \langle A,B\rangle=\Ran_AB=\{\C(-,A),B\}
  \colon\C\longrightarrow\C
\end{equation}
In other words, $\langle A,B\rangle (X)=\{\C(X,A),B\}$, the cotensor product of
$B\in\C$ by the category $\C(X,A)$. There is a morphism
\begin{equation}
  \label{eq:57}
  \mathrm{ev}_{A,B}\colon \langle A,B\rangle (A)=\{\C(A,A),B\} \longrightarrow B
\end{equation}
that is the counit of the cotensor product in \C, ie the morphism picked out by
\begin{equation}
  \label{eq:69}
  \mathbf{1}\xrightarrow{1_A}
  \C(A,A)\xrightarrow{\eta}\C(\{C(A,A),B\},B)
\end{equation}
where $\eta$ is the unit of the cotensor product.
The morphism~\eqref{eq:57}
satisfies the following universal property: for any \V-functor
$S\colon \C\to \C$, the \CAT-functor
\begin{equation}
  [\C,\C](S,\langle A,B\rangle)\xrightarrow{\mathrm{proj}_A} \C(S(A),\langle A,B\rangle(A))
  \xrightarrow{\eqref{eq:57}} \C(S(A),B)
\end{equation}
is an isomorphism.
The assignment $(A,B)\mapsto\langle A,B\rangle$ defines a $\mathbf{CAT}$-functor
$\langle-,-\rangle\colon\C^{\mathrm{op}}\times\C\to\mathrm{End}(\C)$.

As it is
the case of any
right Kan extension of a functor along itself, $\langle A,A\rangle=\Ran_AA$ has a canonical structure of a
\V-monad, sometimes called density monad. More explicitly, the multiplication $\langle A,A\rangle ^2\to\langle
A,A\rangle$ corresponds to the morphism in \C
\begin{equation}
  \langle A,A\rangle^2(A)\xrightarrow{\langle A,A\rangle(\mathrm{ev}_{A,A})}
  \langle A,A\rangle \xrightarrow{\mathrm{ev}_{A,A}}A
\end{equation}
while the unit $1\to\langle A,A\rangle$ corresponds to the identity morphism
$1\colon A\to A$. 

The definition of the \V-monad $\langle A,A\rangle $ is such that the following
statement holds. Suppose that $\mathsf{S}$ is a 2-monad on \C\ and
$\alpha\colon S\Rightarrow\langle A,A\rangle$ is a 2-natural transformation with
corresponding morphism $a\colon S(A)\to A$ in \C. Then, $\alpha$ is a --~strict~--
morphism of 2-monads --~ie, it is compatible with the multiplications and units
in the usual way~-- if and only if $a$ is an $\mathsf{S}$-algebra structure on $A$.

For each morphism $f\colon A\to B$ in $\C$, one may consider the following
comma-object in the \CAT-category $\mathrm{End}(\C)$.
\begin{equation}
  \label{eq:49}
  \xymatrixrowsep{.1cm}
  \diagram
  &\langle A,A\rangle\ar[dr]^{\langle A,f\rangle}&\\
  \langle f,f\rangle_\ell\rrtwocell<\omit>{}\ar[ur]\ar[dr]&&\langle A,B\rangle\\
  &\langle B,B\rangle\ar[ur]_{\langle f,B\rangle}&
  \enddiagram
\end{equation}
Thus, for any endo-2-functor $S$ of \C\, there is an isomorphism between the
category of 2-natural transformations $S\Rightarrow\langle f,f\rangle_\ell$ and
modifications between them, and the comma-category depicted.
\begin{equation}
  \label{eq:58}
  \diagram
  \C(1,f)\downarrow \C(Sf,1)\ar[d]\ar[r]\drtwocell<\omit>{^}
  &\C(S(B),B)\ar[d]^{\C(Sf,1)}\\
  \C(S(A),A)\ar[r]_-{\C(1,f)}&
  \C(S(A),B)
  \enddiagram
\end{equation}
In particular, each 2-natural transformation $S\Rightarrow\langle f,f\rangle_\ell$
is defined by a unique triple formed by two morphisms $a\colon S(A)\to A$ and
$b\colon S(B)\to B$ and a 2-cell $\varphi\colon f\cdot a\Rightarrow b\cdot
Sf$.

The 2-functor $\langle f,f\rangle_\ell$ has a 2-monad structure that makes the
projections
$\langle A,A\rangle\leftarrow\langle f,f\rangle_\ell\to \langle B,B\rangle$
--~strict~-- morphisms of 2-monads. Furthermore, the bijection mentioned in the
previous paragraph restricts to a bijection between morphisms of 2-monads
$\mathsf{S}\to\langle f,f\rangle_\ell$ and triples $(a,b,\varphi)$ where $a\colon
S(A)\to A$ and $b\colon S(B)\to B$ are $\mathsf{S}$-algebra structures, and
$(f,\varphi)$ is a lax morphism of $\mathsf{S}$-algebras from $(A,a)$ to $(B,b)$.
This method of
describing morphisms as monad maps can be found in the literature as early as in~\cite[\S
3]{MR0364394}.

\subsection{Reflections of monads through a functor}
\label{sec:refl-monads-along}

The following lemma can be applied to the situation of a right adjoint functor
$W\colon\C\to\mathcal{B}$.
\begin{lemma}
  \label{l:11}
  Suppose that $W\colon \C\to\mathcal{B}$ is a continuous \V-functor
  where $\C$ is a complete \V-category and $\mathcal{B}$ admits all right Kan
  extensions along $W$ (ie, all limits weighted by $\mathcal{B}(1,W)$).
\begin{enumerate}
\item\label{item:7}
  If $A$, $B$ are objects of \C, then the canonical \V-natural transformation
  \begin{equation}
    \Ran_W(W\langle A,B\rangle)
    \longrightarrow
    \Ran_{W(A)}(W(B))
    \label{eq:50}
  \end{equation}
  is an isomorphism.
\item\label{item:8}
If $f$, $g$ are
  morphisms in $\C$, then the canonical \V-natural transformation
  \begin{equation}
    \label{eq:51}
    \Ran_W(W\langle f,g\rangle_\ell)
    \longrightarrow
    \langle Wf,Wg\rangle_\ell
  \end{equation}
  is an isomorphism.
\end{enumerate}
\end{lemma}
\begin{proof}
  The proof is straightforward if one takes care of showing at each stage the
  existence of necessary limits in $\mathcal{B}$.
  The continuous \V-functor $W$ preserves the cotensor products $\langle
  A,B\rangle(X)=\{\C(X,A),B\}$,  so $W\langle A,B\rangle\cong
  \Ran_A(W(B))$. Thus, there is an isomorphism
  \begin{equation}
    \tau\colon
    \Ran_W(W\langle A,B\rangle)\cong\Ran_W\Ran_A(W(B))\cong\Ran_{W(A)}(W(B))
    =\langle W(A),W(B)\rangle
  \end{equation}
  where the first object exists by the hypothesis on $\mathcal{B}$, the second
  exists by the reason mentioned above, and the third does by iterated Kan
  extensions~\cite[\S 4.4]{Kelly:BCECTrep}

  To prove the second part of the statement, suppose given $f\colon A\to B$ and
  $g\colon C\to D$. It is easy to
  see that the isomorphism of the first part of the statement is natural
  in $A$ and $B$, so we have a commutative diagram as follows.
  \begin{equation}
    \label{eq:63}
    \xymatrixcolsep{1.6cm}
    \xymatrixrowsep{.5cm}
    \diagram
    \Ran_W(W\langle B,C\rangle)\ar[d]_\cong \ar[r]^-{\Ran_WW\langle 1,g\rangle}&
    \Ran_W(W\langle B,D\rangle)\ar[d]_\cong&
    \Ran_W(W\langle A,D\rangle) \ar[l]_-{\Ran_WW\langle f,1\rangle}
    \ar[d]^\cong
    \\
    \langle W(B),W(C)\rangle \ar[r]^-{\langle 1,Wg\rangle}&
    \langle W(B),W(D)\rangle &
    \langle W(A),W(D)\rangle\ar[l]_-{\langle Wf,1\rangle}
    \enddiagram
  \end{equation}
We shall show that $\langle Wf,Wg\rangle_\ell$ exists and is
isomorphic, in a canonical way, to $\Ran_W(W\langle f,g\rangle_\ell)$ in three stages represented by
the following three isomorphisms.
  \begin{gather}
    \Ran_W(W\langle f,g\rangle_\ell)=
    \Ran_W(W(\langle f,1\rangle\downarrow\langle 1,g\rangle))
    \longrightarrow
    \Ran_W \bigl( (W\langle f,1\rangle)\downarrow (W\langle 1,g\rangle) \bigr)
    \\
    \Ran_W \bigl( (W\langle f,1\rangle)\downarrow (W\langle 1,g\rangle) \bigr)
    \longrightarrow
    (\Ran_W W\langle f,1\rangle)\downarrow (\Ran_W W\langle 1,g\rangle)
    \label{eq:64}
    \\
    (\Ran_W W\langle f,1\rangle)\downarrow (\Ran_W W\langle 1,g\rangle)
    \xrightarrow{\phantom{M}\cong\phantom{M}}
    \langle Wf,1\rangle\downarrow\langle 1,Wg\rangle =
    \langle Wf,Wg\rangle_\ell
    \label{eq:61}
  \end{gather}
  By hypothesis, $W$ is continuous, and, in particular, it preserves
  comma-objects, so the first of the above isomorphisms, and in particular, its
  codomain, exists.

  Let us now show
  that the codomain of the second morphism, shown
  in~\eqref{eq:64}, exists and is isomorphic to the domain. In doing so, we
  may substitute $W\langle f,1\rangle$ and $W\langle 1,g\rangle$ by two arbitrary
  \V-functors $F,G\colon \C\to\mathcal{B}$. By hypothesis, the
  $\V$-functor $[\mathcal{B},\mathcal{B}]\to[\C,\mathcal{B}]$ given by
  restricting along $W$ has a right adjoint, namely $\Ran_W$. The \V-functor
  $F\downarrow G\colon\C\to\mathcal{B}$ exists and is constructed pointwise by
  taking comma-objects in $\mathcal{B}$. This is preserved by right adjoint
  $\Ran_W$, so $\Ran_WF\downarrow\Ran_WG$ exists and is canonically isomorphic
  to $\Ran_W(F\downarrow G)$.

  The third transformation~\eqref{eq:61} is the one induced by the
  diagram~\eqref{eq:63}. This completes the proof of the existence of
  $\Ran_W(W\langle f,g\rangle_\ell)\cong \langle Wf,Wg\rangle_\ell$.
\end{proof}

We shall denote the category of \V-monads on \C\ by
$\mathbf{Mnd}(\C)$.
This is the category of monoids in the monoidal category of endo \V-functors of
\C\ and monoid morphisms, by which we mean \V-natural transformations $T\Rightarrow S$ that are
compatible with the units and multiplications --~morphism of \V-monads on \C.

If $W\colon\C\to\mathcal B$ is a \V-functor and $\mathsf{T}=(T,i,m)$ a
\V-monad on $\C$, then $\Ran_W(WT)$ has a canonical structure of a
\V-monad on $\mathcal{B}$, whenever this right Kan extension exists. The unit
is the \V-natural transformation that corresponds to
$Wi\colon W\Rightarrow WT$, while the multiplication corresponds to the
\V-natural transformation
\begin{equation}
  \label{eq:42}
  \Ran_W(WT)\Ran_W(WT)W\longrightarrow \Ran_W(WT)WT\longrightarrow
  WTT\xrightarrow{Wm}WT.
\end{equation}
If $\Ran_W$ always exists, then $E\mapsto\Ran_W(WE)$ is a (lax) monoidal functor
from $\mathrm{End}(\C)$ to $\mathrm{End}(\mathcal B)$, so it induces a
functor
$\mathbf{Mnd}(\C)\longrightarrow \mathbf{Mnd}(\mathcal B)$.
This is the case, for example, when $W$ has a left adjoint.

In a moment we will need the more general notion of morphism
between monads on different \V-categories. If
$(\mathcal{C},\mathsf{T})$ is a \V-monad on \C\ and $(\mathcal{B},\mathsf{S})$ a
\V-monad on $\mathcal{B}$, a morphism
$(\mathcal{C},\mathsf{T})\to(\mathcal{B},\mathsf{S})$ is a \V-functor
$W\colon\C\to\mathcal{B}$ equipped with a \V-natural transformation
$\omega\colon SW\Rightarrow WT$ satisfying compatibility axioms with the
unit and multiplication of the respective monads, that can be found, for
example, in~\cite{street.ftm}.
There is a bijection between \V-natural transformations $\omega$
that make $(W,\omega)$ a morphism of monads
$(\mathcal{C},\mathsf{T})\to(\mathcal{B},\mathsf{S})$ and liftings of $W$ to the
category of algebras, ie \V-functors, depicted by a dashed arrow, that make the
square commutative.
\begin{equation}
  \label{eq:60}
  \xymatrixrowsep{.4cm}
  \diagram
  \mathsf{T}\text-\mathrm{Alg}\ar@{-->}[r]\ar[d]_{V^{\mathsf{T}}}&
  \mathsf{S}\text-\mathrm{Alg}\ar[d]^{V^{\mathsf{S}}}\\
  \C\ar[r]^-W&
  \mathcal B
  \enddiagram
\end{equation}

There is a bijection between morphisms of \V-monads
$(\C,\mathsf{T}) \to(\mathcal B,\mathsf{S})$ over
$W\colon\C\to\mathcal B$ and morphisms $\mathsf{S}\to\Ran_W(WT)$ in
$\mathbf{Mnd}(\C)$. Indeed, a morphism
structure $\omega\colon SW\Rightarrow WT$ on $W$ corresponds to a monoid morphism
$\alpha\colon \mathsf{S}\to\Ran_W(WT)$ via the universal property of $\Ran_W$.
\begin{equation}
  \xymatrixcolsep{2.5cm}
  \diagram
  \mathcal{C}\ar[d]_T\ar[r]^-W\dtwocell<\omit>{<-4>}&
  \mathcal{B}\ar[d]_{\Ran_WWT}\duppertwocell^{S}{\alpha}\\
  \mathcal{C}\ar[r]_-W&
  \mathcal{B}
  \enddiagram
  \quad=\quad
  \diagram
  \mathcal{C}\ar[d]_T\ar[r]^-W\dtwocell<\omit>{<-7>\omega}&
  \mathcal{B}\duppertwocell^{S}{\omit}\\
  \mathcal{C}\ar[r]_-W&
  \mathcal{B}
  \enddiagram
\end{equation}

\begin{lemma}
  \label{l:13}
  Consider a commutative square of \V-functors~\eqref{eq:60}, where $\mathsf{T}$
  and $\mathsf{S}$ are \V-monads on $\C$ and $\mathcal B$, respectively, and
  assume that $\Ran_W$ of \V-functors into $\mathcal{B}$ always exists. Let
  $(W,w)\colon(\mathcal{C},\mathsf{T})\to(\mathcal{B},\mathsf{S})$ be the
  morphism of \V-monads associated to the square~\eqref{eq:60} and
  $\alpha\colon \mathsf{S}\to\Ran_W(WT)$ the morphism in
  $\mathbf{Mnd}(\mathcal{B})$ associated to $(W,\omega)$. If the
  square~\eqref{eq:60} is a pullback, then $(W,w)$ exhibits $\mathsf{T}$ as a
  reflection of $\mathsf{S}$ along
  $\Ran_W(W-)\colon\mathbf{Mnd}(\mathcal{C})\to\mathbf{Mnd}(\mathcal{B})$.
\end{lemma}
\begin{proof}
  Let $\mathsf{P}$ be a \V-monad on $\C$ and consider the following sets:
  \begin{enumerate}[label=(\alph*)]
  \item \label{item:1} $\mathbf{Mnd}(\C)(\mathsf{T},\mathsf{P})$;
  \item \label{item:3} \V-functors
    $\mathsf{P}\text-\mathrm{Alg}\to\mathsf{T}\text-\mathrm{Alg}$ that
    commute with the respective forgetful functors;
  \item \label{item:4} $\mathbf{Mnd}(\mathcal B)(\mathsf{S},\Ran_W(WP))$;
  \item \label{item:6} \V-functors
    $\mathsf{P}\text-\mathrm{Alg}\to\mathsf{S}\text-\mathrm{Alg}$ over $W$.
  \end{enumerate}
  The sets \ref*{item:1} and \ref*{item:3} are bijective, as well as the sets \ref*{item:4} and
  \ref*{item:6}, and the bijections are natural in $\mathsf{P}$, by the comments
  previous to this lemma. To say that $\mathsf{T}$ is the reflection of
  $\mathsf{S}$ is equally saying that \ref*{item:1} is naturally bijective to
  \ref*{item:4}, while to say that \eqref{eq:60}~is a pullback implies that
  \ref*{item:3} and \ref*{item:6} are naturally bijective.
\end{proof}
  The existence of $\Ran_W$ in Lemma~\ref{l:13} is satisfied,
  for example, when $W$ has a left adjoint.

\subsection{Reflections of lax idempotent \V-monads}
\label{sec:refl-lax-idemp}

  There is a characterisation of lax idempotent 2-monads, due
  to~\cite{Kelly:Prop-like}, which we will find useful. Given a morphism  $f\colon A\to B$ in \C,
  we denote by $\sigma_f\colon\langle
  f,f\rangle_\ell\to\langle A,A\rangle\times\langle B,B\rangle$ the morphism of
  \V-monads induced by the projections of the comma-object~\eqref{eq:49}. A \V-monad
  $\mathsf{T}$ is lax idempotent precisely when it is co-orthogonal to each $\sigma_f$
  in $\mathbf{Mnd}(\C)$, ie when $\mathbf{Mnd}(\C)(\mathsf{T},\sigma_f)$ is a
  bijection. 

\begin{cor}
  \label{cor:9}
  If the \V-monad $\mathsf{S}$ in Lemma~\ref{l:13} is lax idempotent,
  $\C$ is cocomplete, $\mathcal B$ admits $\Ran_W$ and $W$ is continuous, then
  $\mathsf{T}$ is lax idempotent.
\end{cor}
\begin{proof}
  By the remarks previous to the present corollary, we must show that $\mathsf{T}$ is co-orthogonal to the morphism
  $\sigma_f\colon\langle f,f\rangle_\ell\to\langle A,A\rangle\times\langle
  B,B\rangle$, for any morphism $f\colon A\to B$ in $\C$. Equivalently, that
  $\mathsf{S}$ is co-orthogonal to $\Ran_W(W\sigma_f)$ by Lemma~\ref{l:13}. This
  morphism is isomorphic to $\sigma_{Wf}$, via the
  isomorphisms $\Ran_W(W\langle A,A\rangle)\cong\langle(W(A),W(A)\rangle$ and
  $\Ran_W(W\langle f,f\rangle_\ell)\cong \langle W(f),W(f)\rangle_\ell$ of
  Lemma~\ref{l:11}, from where it is obvious that $\mathsf{S}$ is co-orthogonal
  to $\Ran_W(W\sigma)$.
\end{proof}

\bibliographystyle{abbrv}
\bibliography{biblio.bib}
\end{document}